\newtheorem{proposition}{Proposition}[section]
\newtheorem{theorem}[proposition]{Theorem}
\newtheorem{lemma}[proposition]{Lemma}
\newtheorem{definition}[proposition]{Definition}
\newtheorem{remark}[proposition]{Remark}
\newtheorem{example}[proposition]{Example}
\newtheorem{algorithm}[proposition]{Algorithm}
\newenvironment{proof}{{\noindent \bf Proof:}}{\hfill $\fbox{}$ \vspace*{5mm}}
\numberwithin{equation}{section}
\newcommand{\bfe}{{\bf e}}
\newcommand{\bp}{{\bf p}}
\newcommand{\bu}{{\bf u}}
\newcommand{\br}{{\bf r}}
\newcommand{\bv}{{\bf v}}
\newcommand{\cg}{\mathcal{G}}
\newcommand{\ch}{\mathcal{H}}
\newcommand{\grass}{{\rm Grass}}
\newcommand{\st}{{\rm St}}
\newcommand{\tr}{{\rm tr}}
\newcommand{\diag}{{\rm diag}}
\newcommand{\ve}{{\rm vec}}
\newcommand{\C}{{\mathbb C}}
\newcommand{\Cn}{{\mathbb C}^{n}}
\newcommand{\Rnn}{{\mathbb R}^{n\times n}}
\newcommand{\BE}{\begin{equation}}
\newcommand{\EE}{\end{equation}}
\DeclareMathOperator*{\argmin}{argmin}
\newcommand{\normmm}[1]{{\vert\kern-0.25ex \vert\kern-0.25ex \vert #1
    \vert\kern-0.25ex \vert\kern-0.25ex\vert}}
\begin{document}

\title{New Formulation and Computation for Generalized Singular Values of Grassman Matrix Pair}
\author{Wei-Wei Xu\thanks{School of Mathematics and Statistics, Nanjing University of Information Science and Technology, Nanjing 210044, People's Republic of China (wwx19840904@sina.com). The research of this author is partially supported by the National Natural Science Foundation of China  (No. 11971243) and the Natural Science Foundation of Jiangsu Province of China (No. BK20181405).}
\and Michael K. Ng\thanks{Department of Mathematics, The University of Hong Kong, Pokfulam, Hong Kong, People's Republic of China (mng@maths.hku.hk). The research of this author is partially supported by the HKRGC GRF 12306616, 12200317, 12300218, 12300519, and HKU 104005583.}
\and Zheng-Jian Bai\thanks{School of Mathematical Sciences and Fujian Provincial Key Laboratory on Mathematical Modeling \& High Performance Scientific Computing,  Xiamen University, Xiamen 361005, People's Republic of China (zjbai@xmu.edu.cn). The research of this author is partially supported by the National Natural Science Foundation of China (No. 11671337) and the Fundamental Research Funds for the Central Universities (No. 20720180008).}
}

\maketitle

\begin{abstract}
In this paper, we derive new model formulations for computing generalized singular values of a Grassman matrix pair.
These new formulations make use of truncated filter matrices to locate the $i$-th generalized singular value of a Grassman matrix pair.
The resulting matrix optimization problems can be solved by using numerical methods involving Newton's method on Grassmann manifold. 
Numerical examples on synthetic data sets and gene expression data sets are reported to demonstrate the high accuracy
and the fast computation of the proposed new ormulations  for computing arbitrary generalized singular value of Grassman matrix pair.
 \end{abstract}

{\bf Keywords.} New model formulation, arbitrary generalized singular value, Grassman matrix pair, Newton's method on Grassmann manifold

\vspace{3mm}
{\bf 2010 AMS subject classifications.} 65F15, 65F99.

\pagestyle{myheadings} \thispagestyle{plain} \markboth{}{New model formula and methods for
computing generalized singular values of GMP}

\section{Introduction}

The generalized singular value decomposition (GSVD) used in mathematics and numerical computations \cite{1,2,3} is a very useful and versatile tool. The GSVD of two matrices having the same number of columns was first proposed by Van Loan \cite{3}.
It is  very useful in many matrix computation problems and practical applications, such as the Kronecker canonical form of a general matrix pencil, the linearly constrained least-squares problem, the general Gauss-Markov linear model, the generalized total least-squares problem, real-time signal processing, comparative analysis of DNA microarrays and so on; see for example \cite{4,5,6,7, 8,9,13,1, 2,10, 11,15,3,16,17,14}.

Numerical methods and perturbation analysis of GSVD have been well developed; see for instance \cite{6,9,13, 10,11,15,23,3,16,17,14}.
The GSVD of two matrices having the same number of columns was first proposed by Van Loan \cite{3}. Van Loan \cite{3} and Paige \cite{11} provided algorithms for computing the generalized singular value decomposition. Bai and Demmel \cite{6} described a variation of Paige's algorithm for computing the GSVD introduced by Van Loan \cite{3} and Paige and Saunders \cite{PS81}. Ewerbring and Luk \cite{13} and Zha \cite{14} proposed a GSVD for matrix triplets. Stewart \cite{15} and Van Loan \cite{16} proposed two algorithms for computing the GSVD. On perturbation analysis of the GSVD, Sun \cite{23} and Li \cite{10} presented several perturbation bounds of generalized singular values (GSVs) of a Grassman matrix pair (GMP) and their associated subspaces. Xu et al. \cite{17} provided the explicit expression and sharper bounds of the chordal metric between GSVs of a GMP.

Recently,  the GSVD plays an important role in the analysis of DNA microarrays and gene data. For arbitrary generalized singular value $(\alpha_{i},\beta_{i})$ of a GMP, the ratio $\alpha_{i}/\beta_{i}$ can be used in comparison analysis of gene data as an indicator; see for example \cite{4,5}. Moreover, the formulation for arbitrary GSV of a GMP has not been studied.
Usually, the GSVD are computed by making decompositions (e.g., QR decomposition and CS decomposition).
In these cases, computing the whole decomposition makes computational cost higher.
The main goal of this paper is to propose new model formulations for computing arbitrary GSV of a GMP. We first derive new formulations for computing arbitrary GSV of the GMP. By using truncated filter matrices, the matrix optimization problems can be reformulated to locate the $i$-th GSV of the GMP. The resulting optimization problems can be solved by using Newton's method on Grassmann manifold. Numerical examples on synthetic data sets and gene expression data sets are reported to demonstrate the high accuracy and the fast computation of the proposed method for computing
arbitrary  GSV of a GMP.

\subsection{Organization} The rest of this paper is organized as follows. In Section 2,
we provide mathematical preliminaries and derive new formulations for
computing GSVs of a GMP.
In Section 3, we present the numerical method for solving the resulting optimization models.
In Section 4, we provide numerical examples to show the efficiency of
the theoretical results. Finally, some concluding remarks are given in Section 5.

\subsection{Notation}
Throughout this paper we always use the following notation and definitions. $\imath$ denotes imaginary unit $\sqrt{-1}$ and $\mathbb{R}$, $\mathbb{C}$, $\mathbb{R}^{n}$, $\mathbb{C}^{m\times n}$ and $\mathbb{U}_n$ are the sets of real numbers, complex numbers, $n$-dimensional real vectors, $m\times n$ complex matrices and $n\times n$ unitary matrices accordingly. $|\cdot|$ stands for the absolute value of a complex number. $I_{n}$ and $O_{m\times n}$ stand for the identity matrix of order $n$ and the $m\times n$ zero matrix, respectively. $\bar{A}$, $A^{T}$, $A^{H}$, $A^{-1},$ $\det(A),\;\tr(A)$ denote the conjugate, transpose, conjugate transpose, inverse, determinant and trace of a matrix $A$ accordingly. By $\|\cdot \|_{2}$ we denote the spectral norm of a matrix. The singular value set of $A$ is denoted by $\sigma(A)$. For given matrices $A,B\in\mathbb{C}^{n\times n}$, $A<(\leq)B$ means $B-A$ is a positive (semi-)definite matrix.  For a matrix $A\in\mathbb{C}^{n\times n}$, we denote by $\sigma_1(A)\ge\sigma_2(A)\ge\cdots \sigma_n(A)\ge 0 $ its singular values, arranged in decreasing order. We denote $[\gamma]$ the largest integer less than or equal to a real number $\gamma$. The symbol ``$\otimes$''  means the Kronecher product and $\ve(\cdot)$ creates a column vector from a matrix by stacking its column vectors below one another.  Finally, $\diag(A,B)$ denotes a block diagonal matrix in which the diagonal blocks are square matrices $A$ and $B$.
\begin{definition}\protect{\cite{1}}
Let $A\in\mathbb{C}^{m\times n}$ and $B\in\mathbb{C}^{p\times n}$. A matrix pair $\{A,B\}$ is an $(m,p,n)$-GMP if rank $(A^{T},B^{T})^{T}=n$.
\end{definition}

For any $(m, p,n)$-GMP $\{A, B\}$, one may see $Z=(A^{T},B^{T})^{T}$ as a point of the complex projective space $G_{n,m+p}$ of all $n$-dimensional subspaces of the $(m+p)$-dimensional complex space $\mathbb{C}^{m+p}$, i.e., one can identify $Z$ with the linear subspace $R(Z)$ (see e.g. \cite{27,26,23}). Clearly, if $\{A, B\}$ is an $(m, p, n)$-GMP, then $(A^{H}A, B^{H}B)$ is a definite matrix pair, i.e., $x^{H}A^{H}Ax +x^{H}B ^{H}Bx>0$ for all nonzero $x\in\mathbb{C}^{n}$. The definite pair $(A^{H}A, B^{H}B)$ has $n$ generalized eigenvalues, and thus the GMP $\{A,B\}$ has $n$ generalized singular values.  A well-developed perturbation theory for the generalized eigenvalue problem of definite pencils is available. Perturbation bounds for the generalized singular value problem can be obtained with the help of the close relation between the two problems. However, the bounds obtained in this way are often unsatisfactory, just like the perturbation bounds for the singular values (SVs) of a single matrix A obtained through the perturbation bounds for the eigenvalues of the Hermitian matrix $A^{H}A$. Therefore, special attention deserves to be paid to perturbations for the generalized singular value problem. Therefore, research on numerical methods and perturbation analysis of the GSVD of a GMP is an important topic (see e.g. \cite{6,10, 23, 3,17}).

\begin{definition}\protect{\cite{1}}
Let $\{A,B\}$ be an $(m,p,n)$-GMP. A nonnegative number pair $(\alpha,\beta)$ is a GSV of
the GMP $\{A,B\}$ if
\[
(\alpha,\beta)\neq (0,0),\quad \det(\beta^2A^{H}A-\alpha^2B^{H}B)=0,\quad \alpha,\beta\ge 0.
\]
The set of GSV of $\{A,B\}$ is denoted by $\sigma\{A,B\}.$ Evidently,
\[
\sigma\{A,B\}=\{(\alpha,\beta)\neq(0,0)\;|\; \mathrm{det}(\beta^{2}A^{H}A-\alpha^{2}B^{H}B)=0,\;\alpha,\beta\geq0\}.
\]
\end{definition}

In the literature \cite{3,16}, there are several formulations of the GSVD. Here we adopt the following form.

\begin{definition}\label{def:gsvd}
Let \{A,B\} be an (m,p,n)-GMP. Then there exist unitary matrices
$U\in\mathbb{C}^{m\times m}, V \in\mathbb{C}^{p\times p},$ and a nonsingular matrix $R\in\mathbb{C}^{n\times n}$ such that
\begin{equation}\label{gsvd1}
U^{H}AR^{-1}=\Sigma_{A},\
V^{H}BR^{-1}=\Sigma_{B},
\end{equation}
\begin{equation}\label{gsvd2}
\Sigma_{A}=\begin{pmatrix}\Lambda&\\&O_{(m-r-s)\times(n-r-s)}\end{pmatrix},\
\Sigma_{B}=\begin{pmatrix}O_{(p+r-n)\times r} & \\ &\Omega\end{pmatrix},
\end{equation}
where 
\[
\Lambda=\diag(\alpha_{1},\ldots,\alpha_{r+s}),\;\Omega=\diag(\beta_{r+1},\ldots,\beta_{n}),
\]
with
\[
1=\alpha_{1}=\cdots=\alpha_{r}>\alpha_{r+1}\geq\cdots\geq\alpha_{r+s}>\alpha_{r+s+1}=\cdots=\alpha_{n}=0,
\]
\[
0=\beta_{1}=\cdots=\beta_{r}<\beta_{r+1}\leq\cdots\leq\beta_{r+s}<\beta_{r+s+1}=\cdots=\beta_{n}=1,
\]
and
\[
\alpha_{i}^{2}+\beta_{i}^{2}=1,\quad \; 1\leq i\leq n.
\]
\end{definition}

\section{Trace function optimization model formulation} In this section we give new model formulation of any GSV of a GMP  by trace function under one variable.
\begin{lemma}\label{lem:2.1} \protect{\cite{18}}
Let $c\in\mathbb{R}$ and $A_1, \ldots, A_m \in \mathbb{C}^{n\times n}$ with $\sigma_{1}(A_{i})\geq\sigma_{2}(A_{i})\geq\cdots\geq\sigma_{n}(A_{i}) $ for $i=1,\ldots,m$. Then we have 
\begin{eqnarray*}
&&\max\limits_{U_1,\ldots, U_m\in \mathbb{U}_{n}}\mathfrak{Re}[\tr(cI_{n}\pm\prod_{j=1}^{m}  U_j A_j)] =nc+\sum\limits_{i=1}^{n}\prod\limits_{j=1}^{m}\sigma_{i}(A_{j}),\\
&& \max\limits_{U_1,\ldots, U_m\in \mathbb{U}_{n}} |\tr(cI_{n}\pm\prod_{j=1}^{m}  U_j A_j)| =n\mid c\mid+\sum\limits_{i=1}^{n}\prod\limits_{j=1}^{m}\sigma_{i}(A_{j}),\\
&&\min\limits_{U_1,\ldots, U_m\in \mathbb{U}_{n}}\mathfrak{Re}[\tr(cI_{n}\pm\prod_{j=1}^{m}  U_j A_j)] = nc-\sum\limits_{i=1}^{n}\prod\limits_{j=1}^{m}\sigma_{i}(A_{j}),
\end{eqnarray*}
\begin{eqnarray*}
&&\min\limits_{U_1,\cdots, U_m\in \mathbb{U}_{n}} |\tr(cI_{n}\pm\prod_{j=1}^{m}  U_j A_j)| 
= \left\{ \begin{array}{ll} n\mid c\mid-\sum\limits_{i=1}^{n}\prod\limits_{j=1}^{m}\sigma_{i}(A_{j}), & \frac{1}{n}\sum\limits_{i=1}^{n}\prod\limits_{j=1}^{m}\sigma_{i}(A_{j})\leq \mid c\mid, \\
0, & \frac{1}{n}\sum\limits_{i=1}^{n}\prod\limits_{j=1}^{m}\sigma_{i}(A_{j})\geq\mid c\mid.\nonumber \end{array} \right.
\end{eqnarray*}
\end{lemma}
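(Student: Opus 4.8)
\emph{Proof idea.} The plan is to reduce all four identities to a single geometric fact about the set
\[
T \;=\; \Bigl\{\, \tr\Bigl(\prod_{j=1}^{m} U_j A_j\Bigr) \;:\; U_1,\ldots,U_m\in\mathbb{U}_n \,\Bigr\}\subseteq\mathbb{C},
\]
namely that, for $n\ge 2$, $T$ is exactly the closed disk $\{\,t\in\mathbb{C}:|t|\le S\,\}$, where $S:=\sum_{i=1}^n\prod_{j=1}^m\sigma_i(A_j)$. Granting this, each right-hand side is immediate: writing $M=\prod_j U_jA_j$, we have $\mathfrak{Re}\,\tr(cI_n\pm M)=nc\pm\mathfrak{Re}\,\tr M$, and $\mathfrak{Re}\,\tr M$ sweeps $[-S,S]$, which yields the first identity ($nc+S$) and the third ($nc-S$); likewise $|\tr(cI_n\pm M)|=|nc\pm\tr M|$ with $\tr M$ sweeping the disk, so the maximum of $|nc\pm t|$ over the disk is $n|c|+S$ (second identity) and its minimum is $\max\{0,\,n|c|-S\}$, which is precisely the two-branch formula of the fourth identity (the two branches being governed by whether $n|c|\ge S$, i.e.\ $\tfrac1n S\le |c|$, or not).

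For the inclusion $T\subseteq\overline{D(0,S)}$ I would invoke the multiplicative (generalized) von Neumann trace inequality: for any $B_1,\dots,B_m\in\mathbb{C}^{n\times n}$,
\[
\bigl|\tr(B_1\cdots B_m)\bigr|\;\le\;\sum_{i=1}^n\sigma_i(B_1\cdots B_m)\;\le\;\sum_{i=1}^n\prod_{j=1}^m\sigma_i(B_j),
\]
the first step being $|\tr X|\le\sum_i\sigma_i(X)$ and the second Horn's log-majorization $\prod_{i\le k}\sigma_i(XY)\le\prod_{i\le k}\sigma_i(X)\sigma_i(Y)$ iterated (log-majorization of decreasingly ordered nonnegative sequences implies weak majorization, which then combines with a rearrangement inequality against the remaining factors); applying this with $B_j=U_jA_j$ and $\sigma_i(U_jA_j)=\sigma_i(A_j)$ gives $|\tr M|\le S$. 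For the reverse inclusion I first hit the boundary: take thin SVDs $A_j=P_j\Sigma_jQ_j^{H}$ with $\Sigma_j=\diag(\sigma_1(A_j),\dots,\sigma_n(A_j))$, choose $U_j=Q_{j-1}P_j^{H}$ for $j=2,\dots,m$ so that every interior factor $Q_{j-1}^{H}U_jP_j$ collapses to $I$, and choose $U_1=Q_mP_1^{H}$ for the wrap-around factor; then $\tr M=\tr(\Sigma_1\cdots\Sigma_m)=S$, and replacing $U_1$ by $-U_1$ gives $-S$. To hit the centre, replace $U_1$ by $Q_mWP_1^{H}$ with $W$ the cyclic-shift permutation matrix, whose diagonal is zero (this uses $n\ge 2$); then $\tr M=\tr(W\Sigma_1\cdots\Sigma_m)=0$. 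Finally $T$ is compact and path-connected, being a continuous image of the compact, path-connected group $\mathbb{U}_n^{\,m}$, and is invariant under $t\mapsto e^{i\alpha}t$ (send $U_1\mapsto e^{i\alpha}U_1$); a path in $T$ joining $0$ to $S$ has modulus covering $[0,S]$ by the intermediate value theorem, and rotation-invariance then fills in the entire disk, giving $T=\overline{D(0,S)}$.

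The main obstacle is pinning down $T$. The multi-factor von Neumann inequality carries the real weight of the upper bound and must be assembled from Horn's inequalities rather than from the (false) estimate $\sigma_i(XY)\le\sigma_i(X)\sigma_i(Y)$; and the ``filling the disk'' step hinges on the explicit telescoping construction together with the zero-diagonal permutation that forces $0\in T$. Everything afterwards — the substitutions $nc\pm(\cdot)$ and the elementary geometry of $\max/\min$ of $|nc\pm t|$ over a disk — is routine. I would also flag the degenerate case $n=1$, where $T$ is a circle rather than a full disk, so the fourth identity's ``$0$'' branch should be read under the standing assumption $n\ge 2$ (or amended accordingly).
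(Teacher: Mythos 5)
Your proposal is correct, and there is nothing in the paper to compare it against: the paper states this lemma with a citation to \cite{18} and gives no proof, so your argument is a self-contained reconstruction. The reduction of all four identities to the single claim that $T=\{\tr(\prod_{j=1}^m U_jA_j):U_j\in\mathbb{U}_n\}$ equals the closed disk of radius $S=\sum_{i=1}^n\prod_{j=1}^m\sigma_i(A_j)$ is sound. The inclusion $T\subseteq\{|t|\le S\}$ is exactly the generalized von Neumann trace inequality, and you are right that it must be assembled from Horn's log-majorization $\prod_{i\le k}\sigma_i(XY)\le\prod_{i\le k}\sigma_i(X)\sigma_i(Y)$ iterated over the factors, followed by the standard implication from weak log-majorization to weak majorization (the mention of a rearrangement inequality is superfluous; weak majorization of the ordered sequences already gives $\sum_i\sigma_i(B_1\cdots B_m)\le\sum_i\prod_j\sigma_i(B_j)$, hence the bound via $|\tr X|\le\sum_i\sigma_i(X)$). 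The telescoping choice $U_j=Q_{j-1}P_j^H$ ($j\ge2$), $U_1=Q_mP_1^H$ attains $S$; replacing $U_1$ by $e^{\imath\alpha}U_1$ gives rotation invariance (and in particular $-S$); the cyclic-shift permutation forces $0\in T$ when $n\ge2$; and compactness plus path-connectedness of $\mathbb{U}_n^m$ with the intermediate value theorem fills the disk. The subsequent geometry of $\max/\min$ of $|nc\pm t|$ over a disk is routine, and only the second branch of the fourth identity actually needs interior points of $T$. Your caveat about $n=1$ is a genuine and correct observation about the statement as quoted: for $n=1$, $m=1$, $A_1=1$, $c=\tfrac12$ the left-hand side of the fourth identity is $\tfrac12$ while the quoted formula gives $0$, so that identity requires $n\ge2$ (or a qualification presumably present in \cite{18}); this is a defect of the transcribed statement, not of your argument.
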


\begin{lemma}\label{lem:2.2} \protect{\cite{19}}
Let $f(X):\mathbb{C}^{n\times n}\rightarrow \mathbb{R}$ be an analytical function of several complex variables on the domain $XX^{H}\leq I_{n}$. Then $f(X)$ attains its maximum modulus on the characteristic manifold $\{X\in\mathbb{C}^{n\times n}:XX^{H}= I_{n}\}$.
\end{lemma}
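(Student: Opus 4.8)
The plan is to reduce the matrix assertion to the classical one‑variable maximum modulus principle by slicing through singular value decompositions. First I would fix the geometry of the domain $\mathcal{D}=\{X\in\mathbb{C}^{n\times n}:XX^{H}\leq I_{n}\}$: it is the closed unit ball of the spectral norm, its interior is $\{X:XX^{H}<I_{n}\}$, and a point of $\partial\mathcal{D}$ is precisely a matrix whose largest singular value equals $1$. Here ``analytic'' must be read as holomorphic in the $n^{2}$ complex entries of $X$ (otherwise the statement fails), and ``maximum modulus'' means $\sup_{\mathcal D}|f|$, $f$ being assumed holomorphic on a neighbourhood of $\mathcal D$ (equivalently, continuous up to $\partial\mathcal D$). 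Applying the ordinary several‑variables maximum modulus principle in the entries of $X$, if $|f|$ attains its maximum over $\mathcal D$ at an interior point then $f$ is constant there and the conclusion is trivial; so we may assume a maximizer $X_{0}$ lies on $\partial\mathcal{D}$, hence $\sigma_{1}(X_{0})=1$.

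Next I would write $X_{0}=U\Sigma V^{H}$ with $U,V\in\mathbb{U}_{n}$ and $\Sigma=\diag(\sigma_{1},\ldots,\sigma_{n})$, $1=\sigma_{1}\geq\cdots\geq\sigma_{n}\geq0$. If $\sigma_{n}=1$ then $X_{0}$ is unitary and we are done. Otherwise pick $k$ with $\sigma_{k}<1$ and form the one‑complex‑variable family $\Sigma(\zeta)=\diag(\sigma_{1},\ldots,\sigma_{k-1},\zeta,\sigma_{k+1},\ldots,\sigma_{n})$, $X(\zeta)=U\Sigma(\zeta)V^{H}$. Since $X(\zeta)X(\zeta)^{H}=U\,\diag(\sigma_{1}^{2},\ldots,|\zeta|^{2},\ldots,\sigma_{n}^{2})\,U^{H}\leq I_{n}$ whenever $|\zeta|\leq1$, the matrix $X(\zeta)$ stays in $\mathcal{D}$ on the closed unit disk, and $g(\zeta):=f(X(\zeta))$ is holomorphic on the open disk and continuous on its closure (composition of $f$ with the affine map $\zeta\mapsto X(\zeta)$). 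Because $g(\sigma_{k})=f(X_{0})$ realises the global maximum of $|f|$ over $\mathcal D$ and $\sigma_{k}$ is a strictly interior point of the disk, the one‑variable maximum modulus principle forces $g$ to be constant; in particular $|f(X(1))|=|f(X_{0})|$, and $X(1)$ has one more singular value equal to $1$ than $X_{0}$.

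Finally I would iterate this replacement over the at most $n-1$ indices with $\sigma_{k}<1$, each step preserving the value of $|f|$ while converting one further singular value into $1$, until the running matrix equals $UV^{H}$, a unitary matrix at which $|f|$ attains its maximum; this is a point of the characteristic manifold $\{X:XX^{H}=I_{n}\}$. (Equivalently, one may simply quote Hua's maximum modulus principle for the classical domain $\mathfrak{R}_{I}$ of matrices with $XX^{H}<I_{n}$, whose Shilov boundary is $\mathbb{U}_{n}$; the slicing argument above is the self‑contained version of that fact.) I expect the only genuinely delicate points to be (i) verifying that the sliced path $X(\zeta)$ really remains in the \emph{closed} domain for all $|\zeta|\leq 1$ — this is exactly why one perturbs a single diagonal entry of $\Sigma$ rather than $X_{0}$ directly — and (ii) invoking the maximum modulus principle in the correct form: since $\sigma_{k}$ is strictly interior, attaining the maximum of $|g|$ there forces $g$ to be \emph{constant}, which is what lets us move the singular value all the way to $1$ without decreasing $|f|$.
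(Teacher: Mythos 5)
Your argument is essentially correct, but it is a genuinely different route from the paper's: the paper offers no proof of this lemma at all, quoting it from Hua's monograph \cite{19}, where the unitary group $\mathbb{U}_n$ arises as the characteristic (Shilov) boundary of the classical domain $I_n-XX^H>0$. Your SVD-slicing argument is a self-contained elementary substitute: it replaces the general theory of classical domains by at most $n$ applications of the one-variable maximum modulus principle along the analytic disks $\zeta\mapsto U\Sigma(\zeta)V^H$, and it shows concretely why the maximum migrates to $\mathbb{U}_n$ (each singular value strictly less than $1$ can be pushed to $1$ without changing $|f|$, since a maximum of $|g|$ at the interior point $\sigma_k$ forces the sliced function $g$ to be constant on the disk). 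What the citation buys the paper is brevity and the full strength of Hua's theory; what your proof buys is an explicit, checkable derivation using nothing beyond the SVD and classical one-variable complex analysis.

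Two caveats. First, your parenthetical ``equivalently, continuous up to $\partial\mathcal{D}$'' is not equivalent to ``holomorphic on a neighbourhood of $\mathcal{D}$'', and the distinction matters for your slices: a boundary maximizer has $\sigma_1(X_0)=1$, so the entire curve $X(\zeta)$ lies in $\partial\mathcal{D}$, and if $f$ were holomorphic only on the open ball then $g(\zeta)=f(X(\zeta))$ need not be holomorphic. Under the natural reading of the lemma ($f$ analytic on the closed domain, hence on a neighbourhood) your argument is sound; for the weaker hypothesis one should first apply it to the dilations $f_r(X)=f(rX)$, $r<1$, and let $r\to1$. Second, be aware that the functions to which the paper actually applies this lemma, such as $\phi_{11}\mapsto\tr(\hat{\Sigma}_A^H\phi_{11}^H T_i\phi_{11}\hat{\Sigma}_A)$, depend on $\phi_{11}^H$ as well and so are not holomorphic; for those the same conclusion follows instead from convexity (a convex continuous function on the spectral-norm unit ball attains its maximum at an extreme point, and the extreme points are exactly the unitary matrices). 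So your holomorphic reading proves the quoted lemma, but a remark along these lines is needed to justify the way the paper invokes it.
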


We now give new formula model of any GSV of a GMP by trace function optimization under one variable.
\begin{lemma}\label{lem:2.3}
Let $\{A,B\}$ be an (m,p,n)-GMP and the GSV $(\alpha_{i},\beta_{i})$ of $\{A,B\}$
be given in \eqref{gsvd1} and \eqref{gsvd2}. The following conclusions hold true. 
\begin{itemize}
\item[\rm a)]
If $n\leq m,$ then for $1\leq i\leq n$,
\begin{eqnarray}\label{def:ai}
\alpha_{i}^{2} &=&\max\limits_{\Phi_{1}\in\mathbb{U}_{m}}\tr(A^{H}\Phi_{1}^{H}Q_{i}\Phi_{1} A(A^{H}A+B^{H}B)^{-1}) \nonumber\\
&&-\max\limits_{\Phi_{1}\in\mathbb{U}_{m}}\tr(A^{H}\Phi_{1}^{H}Q_{i-1}\Phi_{1} A(A^{H}A+B^{H}B)^{-1})\equiv \phi_{i},
\end{eqnarray}
where 
\BE\label{def:Qi}
Q_{i}=\diag(T_i, O_{(m-n)\times(m-n)})
\quad\mbox{with}\quad
T_{i}=\diag(I_i, O_{(n-i)\times(n-i)}).
\EE
\item[\rm b)]
If $n\leq p$, then for $1\leq i\leq n$,
\begin{eqnarray}\label{def:bi}
\beta_{i}^{2} &=&\max\limits_{\Phi_{2}\in\mathbb{U}_{p}}\tr(B^{H}\Phi_{2}^{H}P_{i}\Phi_{2} B(A^{H}A+B^{H}B)^{-1}) \nonumber\\
&&-\max\limits_{\Phi_{2}\in\mathbb{U}_{p}}\tr(B^{H}\Phi_{2}^{H}P_{i-1}\Phi_{1}B(A^{H}A+B^{H}B)^{-1})\equiv \psi_{i},
\end{eqnarray}
where
\BE\label{def:Pi}
P_{i}=\diag(O_{(p-n)\times(p-n)}, F_{i})
\quad\mbox{with}\quad
F_{i}=\diag(O_{(i-1)\times(i-1)}, I_{n-i+1}).
\EE

\item[\rm c)]
If $m\leq n$, then $\alpha_{m+1}=\cdots=\alpha_{n}=0$ and for $1\leq i\leq m$,
\begin{eqnarray}\label{def:ai-2}
\alpha_{i}^{2} &=& \max\limits_{\Psi_{1}\in\mathbb{U}_{m}}\tr(A^{H}\Psi_{1}^{H}\mathcal{Q}_{i}\Psi_{1} A(A^{H}A+B^{H}B)^{-1})\nonumber\\
&&-\max\limits_{\Psi_{1}\in\mathbb{U}_{m}}\tr(A^{H}\Psi_{1}^{H}\mathcal{Q}_{i-1}\Psi_{1} A(A^{H}A+B^{H}B)^{-1})\equiv \varphi_{i},
\end{eqnarray}
where
\BE\label{def:cqi}
\mathcal{Q}_{i}=\diag(I_i, O_{(m-i)\times(m-i)}).
\EE

\item[\rm d)]
If $p\leq n$, then $\beta_{1}=\cdots=\beta_{n-p}=0$ and for $n-p+1\leq i\leq n$,
\begin{eqnarray}\label{def:bi-2}
\beta_{i}^{2} &=& \max\limits_{\Psi_{2}\in\mathbb{U}_{p}}\tr(B^{H}\Psi_{2}^{H}\mathcal{P}_{i}\Psi_{2}B(A^{H}A+B^{H}B)^{-1})\nonumber\\
&&-\max\limits_{\Psi_{2}\in\mathbb{U}_{p}}\tr(B^{H}\Psi_{2}^{H}\mathcal{P}_{i-1}\Psi_{2}B(A^{H}A+B^{H}B)^{-1})\equiv \chi_{i}.
\end{eqnarray}
where
\BE\label{def:cpi}
\mathcal{P}_{i}=\diag(O_{(p-n+i-1)\times(p-n+i-1)}, I_{n-i+1}).
\EE
\end{itemize}
\end{lemma}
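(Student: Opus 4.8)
\medskip
\noindent\emph{Sketch of proof.}
The plan is to turn each of the four extremal problems into a Ky Fan eigenvalue-sum by invoking the GSVD \eqref{gsvd1}--\eqref{gsvd2}. The essential preliminary step is a normalization of the pencil: writing $A=U\Sigma_A R$ and $B=V\Sigma_B R$ from \eqref{gsvd1}, we get $A^HA+B^HB=R^H(\Sigma_A^H\Sigma_A+\Sigma_B^H\Sigma_B)R$, and by \eqref{gsvd2} the diagonal matrix $\Sigma_A^H\Sigma_A+\Sigma_B^H\Sigma_B$ has $j$-th entry $\alpha_j^2+\beta_j^2=1$; hence $A^HA+B^HB=R^HR$ and $(A^HA+B^HB)^{-1}=R^{-1}R^{-H}$. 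The role of this identity is that the factor $R$ carried by $A$ (resp.\ $B$) cancels the weight $R^{-1}R^{-H}$ inside the trace, which is precisely why the right-hand sides of \eqref{def:ai}--\eqref{def:cpi} involve only the $\alpha_j$'s (resp.\ $\beta_j$'s).

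For part a) I would substitute $A=U\Sigma_A R$ into the first trace in \eqref{def:ai} and use cyclicity of the trace together with $R^{-H}R^H=I_n$ to get
\[
\tr(A^H\Phi_1^HQ_i\Phi_1 A(A^HA+B^HB)^{-1})=\tr(Q_i\,W\Sigma_A\Sigma_A^HW^H),\qquad W:=\Phi_1 U .
\]
Since $U$ is a fixed unitary, $W$ ranges over all of $\mathbb{U}_m$ as $\Phi_1$ does, and $W^HQ_iW$ then ranges over all rank-$i$ orthogonal projections of $\mathbb{C}^{m\times m}$. Because $\Sigma_A\Sigma_A^H$ is positive semidefinite with $j$-th largest eigenvalue $\alpha_j^2$ for $1\le j\le n$, the Ky Fan maximum principle yields $\max_{\Phi_1\in\mathbb{U}_m}\tr(A^H\Phi_1^HQ_i\Phi_1 A(A^HA+B^HB)^{-1})=\sum_{j=1}^i\alpha_j^2$. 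Subtracting the same identity at $i-1$ (with the convention $Q_0=O_{m\times m}$, whose maximum is $0$) gives $\phi_i=\sum_{j=1}^i\alpha_j^2-\sum_{j=1}^{i-1}\alpha_j^2=\alpha_i^2$ for $1\le i\le n$, which is a).

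Part c) is the same computation in the regime $m\le n$: there $\mathcal Q_i$ in \eqref{def:cqi} is already a rank-$i$ projection in $\mathbb{C}^{m\times m}$ (no zero padding), $\Sigma_A\Sigma_A^H$ has $j$-th largest eigenvalue $\alpha_j^2$ for $1\le j\le m$, and $\alpha_{m+1}=\cdots=\alpha_n=0$ since $\Sigma_A$ has only $m$ rows. Parts b) and d) are the mirror images, carried out with $B=V\Sigma_B R$ in place of $A=U\Sigma_A R$; the reduction is identical and leads to $\max_{W\in\mathbb{U}_p}\tr(P_i\,W\Sigma_B\Sigma_B^HW^H)$. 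The only point needing care is that the $\beta_j$ are listed in \emph{increasing} order and occupy the trailing block of $\Sigma_B$, so $P_i$ in \eqref{def:Pi} (resp.\ $\mathcal P_i$ in \eqref{def:cpi}) is the orthogonal projection onto the last $n-i+1$ coordinates and $\Sigma_B\Sigma_B^H$ has ordered eigenvalues $\beta_n^2\ge\beta_{n-1}^2\ge\cdots$; Ky Fan then gives $\sum_{j=i}^n\beta_j^2$ for the maximum attached to $P_i$, and the telescoping difference between consecutive filter matrices peels off exactly $\beta_i^2$. For d) one additionally notes $\beta_1=\cdots=\beta_{n-p}=0$ since $\Sigma_B$ has only $p$ rows.

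I do not expect a genuinely hard step. The conceptual crux is the identity $A^HA+B^HB=R^HR$ and the cancellation it produces under the trace; everything after that is the Ky Fan partial-sum identity, which is classical (and lies in the same circle of ideas as Lemma~\ref{lem:2.1}), together with careful bookkeeping of the block sizes in \eqref{def:Qi}, \eqref{def:Pi}, \eqref{def:cqi}, \eqref{def:cpi} and of the fact that the $\alpha_j$ decrease while the $\beta_j$ increase --- this is what fixes the shapes of the truncated filter matrices $Q_i,P_i,\mathcal Q_i,\mathcal P_i$ and the orientation of the telescoping. The only other delicate spot is handling the boundary cases of the telescoping (the convention $Q_0=O$ and its analogues on the $B$-side).
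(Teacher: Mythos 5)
Your proposal is correct, and it reaches the partial sums $\sum_{j\le i}\alpha_j^2$ and $\sum_{j\ge i}\beta_j^2$ by a different key tool than the paper. The paper's proof also starts from the GSVD and the cancellation coming from $A^HA+B^HB=R^HR$ (it inserts $(A^HA+B^HB)^{-1/2}$ and uses that $R(R^HR)^{-1/2}$ is unitary), but it then partitions $\Phi_1U$, restricts the trace to the contractive block $\phi_{11}$ (the leading $n\times n$ block), invokes Lemma \ref{lem:2.2} (Hua's maximum-modulus principle) to pass from the matrix ball $\phi_{11}\phi_{11}^H\le I_n$ to the unitary manifold, and finally evaluates the unitary maximum via Lemma \ref{lem:2.1}. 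You instead absorb $U$ into $W=\Phi_1U$, observe that $W^HQ_iW$ sweeps out all rank-$i$ orthogonal projections, and apply the classical Ky Fan trace-maximum principle to $\Sigma_A\Sigma_A^H$ (resp.\ $\Sigma_B\Sigma_B^H$). Your route is more elementary and self-contained for this one-variable quadratic case: it avoids the block-partition/contraction step and the slight awkwardness that Lemma \ref{lem:2.1} maximizes over independent unitaries while in the paper's reduction the two unitary factors are coupled ($\phi_{11}$ and $\phi_{11}^H$); the paper's route, on the other hand, exercises exactly the machinery (Lemmas \ref{lem:2.1} and \ref{lem:2.2}) that becomes indispensable in the genuinely bilinear two-variable analogue, Lemma \ref{lem:2.4}. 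One bookkeeping point you glossed over, which the paper's own proof also silently corrects: in parts b) and d) as printed the second maximum involves $P_{i-1}$ (rank $n-i+2$), which would yield $-\beta_{i-1}^2$ rather than $\beta_i^2$; the telescoping must subtract the maximum attached to $P_{i+1}$ (resp.\ $\mathcal{P}_{i+1}$), exactly as in \eqref{nf:bi1} and \eqref{nf:bi1-2}, and this is what your ``difference between consecutive filter matrices'' convention amounts to.
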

\begin{proof} a) If $n\leq m,$ then for $1\leq i\leq n$, we let $Q_{i}$ be defined by \eqref{def:Qi}. We set $\Sigma_{A}=
\begin{pmatrix}\hat{\Sigma}_{A}\\O_{(m-n)\times n}\end{pmatrix}$ and for any $\Phi_{1}\in\mathbb{U}_{m}$, we let
$\Phi_{1}U=\begin{pmatrix}\phi_{11} & \phi_{12}\\\phi_{21}&\phi_{22}\end{pmatrix}$  with $\phi_{11}\in\mathbb{C}^{n\times n}$. 
Using the GSVD of $\{A,B\}$ in \eqref{gsvd1} and \eqref{gsvd2} we have
\begin{align*}
&\max\limits_{\Phi_{1}\in\mathbb{U}_{m}}\tr(A^{H}\Phi_{1}^{H}Q_{i}\Phi_{1} A(A^{H}A+B^{H}B)^{-1})\\
=&\max\limits_{\Phi_{1}\in\mathbb{U}_{m}}\tr((A^{H}A+B^{H}B)^{-1/2}A^{H}\Phi_{1}^{H}Q_{i}\Phi_{1} A(A^{H}A+B^{H}B)^{-1/2})\\
=&\max\limits_{\Phi_{1}\in\mathbb{U}_{m}}\tr(\Sigma_{A}^{H}U^{H}\Phi_{1}^{H}Q_{i}\Phi_{1} U\Sigma_{A})\\
=&\max\limits_{\phi_{11}\phi_{11}^{H}\leq I_{n}}\tr(\hat{\Sigma}_{A}^{H}\phi_{11}^{H}T_{i}\phi_{11}\hat{\Sigma}_{A}).
\end{align*}
Observe $f(\phi_{11})=\tr(\hat{\Sigma}_{A}^{H}\phi_{11}^{H}T_{i}\phi_{11}\hat{\Sigma}_{A})$ is an analytical function of several complex variables on the domain $\phi_{11}\phi_{11}^{H}\leq I_{n}$. By Lemma \ref{lem:2.2} we have $f(\phi_{11})$
attains its maximum modulus on the characteristic manifold $\{\phi_{11}\in\mathbb{C}^{n\times n}:\phi_{11}\phi_{11}^{H}= I_{n}\}$. Therefore,
\[
\max\limits_{\phi_{11}\phi_{11}^{H}\leq I_{n}}\tr(\hat{\Sigma}_{A}^{H}\phi_{11}^{H}T_{i}\phi_{11}\hat{\Sigma}_{A})=\max\limits_{\phi_{11}\phi_{11}^{H}= I_{n}}\tr(\hat{\Sigma}_{A}^{H}\phi_{11}^{H}T_{i}\phi_{11}\hat{\Sigma}_{A}).
\]
Thus, by Lemma \ref{lem:2.1} we have
\BE\label{nf:ai}
\max\limits_{\Phi_{1}\in\mathbb{U}_{m}}\tr(A^{H}\Phi_{1}^{H}Q_{i}\Phi_{1}A(A^{H}A+B^{H}B)^{-1})=\max\limits_{\phi_{11}\phi_{11}^{H}= I_{n}}\tr(\hat{\Sigma}_{A}^{H}\phi_{11}^{H}T_{i}\phi_{11}\hat{\Sigma}_{A})
=\alpha_{1}^{2}+\cdots+\alpha_{i}^{2}.
\EE
Similarly, we have
\BE\label{nf:ai1}
\max\limits_{\Phi_{1}\in\mathbb{U}_{m}}\tr(A^{H}\Phi_{1}^{H}Q_{i-1}\Phi_{1} A(A^{H}A+B^{H}B)^{-1})=\alpha_{1}^{2}+\cdots+\alpha_{i-1}^{2}.
\EE
From \eqref{nf:ai}  and \eqref{nf:ai1} it follows that \eqref{def:ai} holds for $1\leq i\leq n$.

b) If $n\leq p$, then for $1\le i\le n$, let $P_{i}$ be defined by \eqref{def:Pi}. Let $\Sigma_{B}=
(O_{(p-n)\times n},\hat{\Sigma}_{B})$. For any $\Phi_{2}\in\mathbb{U}_{p}$, let $\Phi_{2}V=\begin{pmatrix}\psi_{11} & \psi_{12}\\ \psi_{21}&\psi_{22}\end{pmatrix}\in\mathbb{U}_{p}$ with $\psi_{22}\in\mathbb{C}^{n\times n}$. Using the GSVD of $\{A,B\}$ in \eqref{gsvd1} and \eqref{gsvd2} we have
\begin{align*}
&\max\limits_{\Phi_{2}\in\mathbb{U}_{p}}\tr(B^{H}\Phi_{2}^{H}P_{i}\Phi_{2} B(A^{H}A+B^{H}B)^{-1}) \\
&\max\limits_{\Phi_{2}\in\mathbb{U}_{p}}\tr((A^{H}A+B^{H}B)^{-1/2}B^{H}\Phi_{2}^{H}P_{i}\Phi_{2} B(A^{H}A+B^{H}B)^{-1/2})\\
=&\max\limits_{\Phi_{2}\in\mathbb{U}_{p}}\tr(\Sigma_{B}^{H}V^{H}\Phi_{2}^{H}P_{i}\Phi_{2}V\Sigma_{B})\\
=&\max\limits_{\psi_{22}\psi_{22}^{H}\leq I_{n}}\tr(\hat{\Sigma}_{B}^{H}\psi_{22}^{H}F_{i}\psi_{22}\hat{\Sigma}_{B}),
\end{align*}
which attains its maximum modulus on the characteristic manifold $\{\psi_{22}\in\mathbb{C}^{n\times n}:\psi_{22}\psi_{22}^{H}= I_{n}\}$ by Lemma \ref{lem:2.2}. Then by Lemma \ref{lem:2.1} we have
\BE\label{nf:bi}
\max\limits_{\Phi_{2}\in\mathbb{U}_{p}}\tr(B^{H}\Phi_{2}^{H}P_{i}\Phi_{2}B(A^{H}A+B^{H}B)^{-1})=\max\limits_{\psi_{22}\psi_{22}^{H}= I_{n}}\tr(\hat{\Sigma}_{B}^{H}\psi_{22}^{H}F_{i}\psi_{22}\hat{\Sigma}_{B})=\beta_{i}^{2}+\cdots+\beta_{n}^{2}.
\EE
Similarly, we have
\BE\label{nf:bi1}
\max\limits_{\Psi_{1}\in\mathbb{U}_{p}}\tr(B^{H}\Psi_{1}^{H}P_{i+1}\Psi_{1} B(A^{H}A+B^{H}B)^{-1})=\beta_{i+1}^{2}+\cdots+\beta_{n}^{2}.
\EE
Using  \eqref{nf:bi}  and \eqref{nf:bi1} we can conclude that \eqref{def:bi} holds  for $1\leq i\leq n$.

c) If $m\leq n$, then it is easy to check that $\alpha_{m+1}=\cdots=\alpha_{n}=0$. For $1\leq i\leq m$, let $\mathcal{Q}_{i}$ be defined by \eqref{def:cqi}. We set $\Sigma_{A}=(\hat{\Sigma}_{A},O_{m\times(n-m)})$. Using  the GSVD of $\{A,B\}$ in \eqref{gsvd1} and \eqref{gsvd2} and Lemma \ref{lem:2.1} we have 
\begin{align}\label{nf:ai-2}
&\max\limits_{\Psi_{1}\in\mathbb{U}_{m}}\tr(A^{H}\Psi_{1}^{H}\mathcal{Q}_{i}\Psi_{1} A(A^{H}A+B^{H}B)^{-1})\nonumber\\
=&\max\limits_{\Psi_{1}\in\mathbb{U}_{m}}\tr(\Sigma_{A}^{H}U^{H}\Psi_{1}^{H}\mathcal{Q}_{i}\Psi_{1} U\Sigma_{A})\nonumber\\
=&\max\limits_{\Psi_{1}\in\mathbb{U}_{m}}\tr\left(\diag(\hat{\Sigma}_{A}^{H}U^{H}\Psi_{1}^{H}\mathcal{Q}_{i}\Psi_{1} U\hat{\Sigma}_{A}, O_{(n-m)\times (n-m)})\right)\nonumber\\
=&\max\limits_{\Psi_{1}\in\mathbb{U}_{m}}\tr(\hat{\Sigma}_{A}^{H}U^{H}\Psi_{1}^{H}\mathcal{Q}_{i}\Psi_{1} U\hat{\Sigma}_{A})\nonumber\\
=&\alpha_{1}^{2}+\cdots+\alpha_{i}^{2}.
\end{align}
Similarly, we have
\begin{align}\label{nf:ai1-2}
\max\limits_{\Psi_{1}\in\mathbb{U}_{m}}\tr(A^{H}\Psi_{1}^{H}\mathcal{Q}_{i-1}\Psi_{1} A(A^{H}A+B^{H}B)^{-1})=\alpha_{1}^{2}+\cdots+\alpha_{i-1}^{2}.
\end{align}
From \eqref{nf:ai-2}  and \eqref{nf:ai1-2} it follows that \eqref{def:ai-2} holds for $1\leq i\leq m$.

d) If $p\leq n$, then $\beta_{1}=\cdots=\beta_{n-p}=0$.
For $n-p+1\leq i\leq n$, let $\mathcal{P}_{i}$ be defined by \eqref{def:cpi}. We set $\Sigma_{B}=(O_{p\times(n-p)},\hat{\Sigma}_{B})$. From the GSVD of $\{A,B\}$ in \eqref{gsvd1} and \eqref{gsvd2} and Lemma \ref{lem:2.1} we have
\begin{align}\label{nf:bi-2}
&\max\limits_{\Psi_{2}\in\mathbb{U}_{p}}\tr(B^{H}\Psi_{2}^{H}\mathcal{P}_{i}\Psi_{2}B(A^{H}A+B^{H}B)^{-1})\nonumber\\
=& \max\limits_{\Psi_{2}\in\mathbb{U}_{p}}\tr((A^{H}A+B^{H}B)^{-1/2}B^{H}\Psi_{2}^{H}\mathcal{P}_{i}\Psi_{2}B(A^{H}A+B^{H}B)^{-1/2})\nonumber\\
=&\max\limits_{\Psi_{2}\in\mathbb{U}_{p}}\tr(\Sigma_{B}^{H}V^{H}\Psi_{2}^{H}\mathcal{P}_{i}\Psi_{2}V\Sigma_{B})\nonumber\\
=&\max\limits_{\Psi_{2}\in\mathbb{U}_{p}}\tr\left(\diag(O_{(n-p)\times (n-p)}, \hat{\Sigma}_{B}^{H}V^{H}\Psi_{2}^{H}\mathcal{P}_{i}\Psi_{2}V\hat{\Sigma}_{B})\right)\nonumber\\
=&\max\limits_{\Psi_{2}\in\mathbb{U}_{p}}\tr(\hat{\Sigma}_{B}^{H}V^{H}\Psi_{2}^{H}\mathcal{P}_{i}\Psi_{2}V\hat{\Sigma}_{B})\nonumber\\
=&\beta_{i}^{2}+\cdots+\beta_{n}^{2}.
\end{align}
Similarly, we have
\begin{align}\label{nf:bi1-2}
\max\limits_{\Psi_{2}\in\mathbb{U}_{p}}\tr(B^{H}\Psi_{2}^{H}\mathcal{P}_{i+1}\Psi_{2}B(A^{H}A+B^{H}B)^{-1})=\beta_{i+1}^{2}+\cdots+\beta_{n}^{2}.
\end{align}
Using \eqref{nf:bi-2}  and \eqref{nf:bi1-2} we have \eqref{def:bi-2} holds for $n-p+1\leq i\leq n$.
The proof is complete.
\end{proof}

Next, we give new formula model of any GSV of a GMP by trace function optimization under two variables.

\begin{lemma}\label{lem:2.4}
Let $\{A,B\}$ be an (m,p,n)-GMP and the GSV $(\alpha_{i},\beta_{i})$ of $\{A,B\}$
be given  in \eqref{gsvd1} and \eqref{gsvd2}.  The following conclusions hold true. 
\begin{itemize}
\item[\rm a)]
If $n\leq m,$ then for $1\leq i\leq n$,
\begin{eqnarray}\label{pi:ai}
\alpha_{i} &=&\max\limits_{\Pi_{1}\in\mathbb{U}_{m},\Pi_{2}\in\mathbb{U}_{n}}|\tr(\Pi_{1}A(A^{H}A+B^{H}B)^{-1/2}\Pi_{2}\mathcal{G}_{i})| \nonumber\\
&&- \max\limits_{\Pi_{1}\in\mathbb{U}_{m},\Pi_{2}\in\mathbb{U}_{n}}|\tr(\Pi_{1}A(A^{H}A+B^{H}B)^{-1/2}\Pi_{2}\mathcal{G}_{i-1})|\equiv\tilde{\phi}_{i},
\end{eqnarray}
where
\BE\label{def:Gi}
\mathcal{G}_{i}=(G_{i},O_{n\times(m-n)}),\quad G_{i}=\diag(I_{i},O_{(n-i)\times (n-i)}).
\EE
\item[\rm b)] If $n\leq p$, then for $1\leq i\leq n$,
\begin{eqnarray}\label{pi:bi}
\beta_{i} &=& \max\limits_{\Xi_{1}\in\mathbb{U}_{p},\Xi_{2}\in\mathbb{U}_{n}}|\tr(\Xi_{1}B(A^{H}A+B^{H}B)^{-1/2}\Xi_{2}\mathcal{H}_{i})| \nonumber\\
&&-\max\limits_{\Xi_{1}\in\mathbb{U}_{p},\Xi_{2}\in\mathbb{U}_{n}}|\tr(\Xi_{1}B(A^{H}A+B^{H}B)^{-1/2}\Xi_{2}\mathcal{H}_{i-1})|\equiv\tilde{\psi}_{i},
\end{eqnarray}
where
\BE\label{def:Hi}
\mathcal{H}_{i}=(O_{n\times (p-n)},H_{i}),\quad H_{i}=\diag(O_{(i-1)\times (i-1)},I_{n-i+1}).
\EE
\item[\rm c)]
If $m\leq n$, then $\alpha_{m+1}=\cdots=\alpha_{n}=0$ and for $1\leq i\leq m$,
\begin{eqnarray}\label{pi:ai-2}
 \alpha_{i}&=&\max\limits_{\Pi_{3}\in\mathbb{U}_{n},\Pi_{4}\in\mathbb{U}_{m}}|\tr(\Pi_{3}(A^{H}A+B^{H}B)^{-1/2}A^{H}\Pi_{4}\mathcal{S}_{i})| \nonumber\\
 &&-\max\limits_{\Pi_{3}\in\mathbb{U}_{n},\Pi_{4}\in\mathbb{U}_{m}}|\tr(\Pi_{3}(A^{H}A+B^{H}B)^{-1/2}A^{H}\Pi_{4}\mathcal{S}_{i-1})|\equiv\tilde{\varphi}_{i},
\end{eqnarray}
\BE\label{def:Si}
\mathcal{S}_{i}=(S_{i},O_{m\times(n-m)}),\quad S_{i}=\diag(I_{i},O_{(m-i)\times (m-i)}).
\EE

\item[\rm d)]
If $p\leq n$, then $\beta_{1}=\cdots=\beta_{n-p}=0$ and for $n-p+1\leq i\leq n$,
\begin{eqnarray}\label{pi:bi-2}
\beta_{i}&=&\max\limits_{\Xi_{3}\in\mathbb{U}_{n},\Xi_{4}\in\mathbb{U}_{p}}|\tr(\Xi_{3}(A^{H}A+B^{H}B)^{-1/2}B^{H}\Xi_{4}\mathcal{W}_{i})| \nonumber\\
&&- \max\limits_{\Xi_{3}\in\mathbb{U}_{n},\Xi_{4}\in\mathbb{U}_{p}}|\tr(\Xi_{3}(A^{H}A+B^{H}B)^{-1/2}B^{H}\Xi_{4}\mathcal{W}_{i-1})|\equiv\tilde{\chi}_{i},
\end{eqnarray}
where
\BE\label{def:Wi}
\mathcal{W}_{i}=(O_{p\times (n-p)},W_{i}), \quad W_{i}=\diag(O_{(p-n+i-1)\times (p-n+i-1)},I_{n-i+1}).
\EE
\end{itemize}
\end{lemma}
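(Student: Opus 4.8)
The plan is that all four items follow one template, which mirrors the proof of Lemma~\ref{lem:2.3} but has to carry along an extra unitary factor; I sketch item a) and indicate the modifications for the rest. First I would pass to the canonical pair: write the GSVD as $A=U\Sigma_A R$, $B=V\Sigma_B R$ from \eqref{gsvd1}, and note that since $\alpha_i^2+\beta_i^2=1$ for every $i$ one has $\Sigma_A^{H}\Sigma_A+\Sigma_B^{H}\Sigma_B=I_n$, hence $A^{H}A+B^{H}B=R^{H}R$. Taking a polar decomposition $R=WP$ with $W$ unitary and $P=(R^{H}R)^{1/2}$ Hermitian positive definite gives $(A^{H}A+B^{H}B)^{-1/2}=P^{-1}$ and therefore $A(A^{H}A+B^{H}B)^{-1/2}=U\Sigma_A W$; for items c)--d) one uses instead $(A^{H}A+B^{H}B)^{-1/2}A^{H}=W^{H}\Sigma_A^{H}U^{H}$, and the analogous identity with $\Sigma_B$ for b) and d). Substituting this and observing that $\Pi_1U$ sweeps out $\mathbb{U}_m$ and $W\Pi_2$ sweeps out $\mathbb{U}_n$ as $\Pi_1,\Pi_2$ vary, the maximization over $(\Pi_1,\Pi_2)$ reduces to $\max_{\widetilde\Pi_1\in\mathbb{U}_m,\,\widetilde\Pi_2\in\mathbb{U}_n}|\tr(\widetilde\Pi_1\Sigma_A\widetilde\Pi_2\mathcal{G}_i)|$.

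Next I would cut the trace down to size $n$. Using cyclicity, rewrite it as $\tr(\mathcal{G}_i\widetilde\Pi_1\Sigma_A\widetilde\Pi_2)$, a trace of $n\times n$ matrices. Since $\Sigma_A$ is block-partitioned with its only nonzero block $\widehat\Sigma_A=\diag(\alpha_1,\dots,\alpha_n)$ occupying the first $n$ rows, and $\mathcal{G}_i=(G_i,\,O_{n\times(m-n)})$ has its nonzero block in the first $n$ columns, only the leading $n\times n$ corner block $\Pi_{11}$ of $\widetilde\Pi_1$ survives the products: $\mathcal{G}_i\widetilde\Pi_1\Sigma_A=G_i\Pi_{11}\widehat\Sigma_A$, so the objective becomes $|\tr(G_i\Pi_{11}\widehat\Sigma_A\widetilde\Pi_2)|$. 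As $\widetilde\Pi_1$ ranges over $\mathbb{U}_m$ the block $\Pi_{11}$ ranges over exactly the contraction ball $\{\Pi_{11}\in\mathbb{C}^{n\times n}:\Pi_{11}\Pi_{11}^{H}\le I_n\}$, by unitary completion. In items c)--d) it is the trailing corner block (of size $p\times p$ or $m\times m$) that survives, because there the nonzero blocks of $\Sigma_B$, $\Sigma_B^{H}$ and of the filter matrices sit at the bottom or the right.

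Then I would remove the inequality constraint and invoke the two cited maximum principles. For fixed $\widetilde\Pi_2$ the map $\Pi_{11}\mapsto\tr(G_i\Pi_{11}\widehat\Sigma_A\widetilde\Pi_2)$ is linear, hence analytic on the ball $\Pi_{11}\Pi_{11}^{H}\le I_n$, so Lemma~\ref{lem:2.2} gives that its maximum modulus is attained on $\{\Pi_{11}\Pi_{11}^{H}=I_n\}$; consequently the joint maximum over $\{\Pi_{11}\Pi_{11}^{H}\le I_n\}\times\mathbb{U}_n$ equals the one over $\mathbb{U}_n\times\mathbb{U}_n$. Lemma~\ref{lem:2.1}, applied with two unitary factors, $c=0$, and the $|\tr(\cdot)|$ formula, then yields $\max_{\Pi_{11},\widetilde\Pi_2\in\mathbb{U}_n}|\tr(\Pi_{11}\widehat\Sigma_A\widetilde\Pi_2 G_i)|=\sum_{k=1}^{n}\sigma_k(\widehat\Sigma_A)\,\sigma_k(G_i)$. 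Because the $\alpha_i$ in Definition~\ref{def:gsvd} are already ordered decreasingly, $\sigma_k(\widehat\Sigma_A)=\alpha_k$, while $\sigma_k(G_i)=1$ for $k\le i$ and $0$ otherwise; so the sum equals $\alpha_1+\cdots+\alpha_i$, the same computation with $\mathcal{G}_{i-1}$ gives $\alpha_1+\cdots+\alpha_{i-1}$ (empty when $i=1$), and subtracting produces \eqref{pi:ai}. Items b)--d) run in the same way, except that $\widehat\Sigma_B$ has its diagonal $\beta$-entries in increasing order, so $\sigma_k(\widehat\Sigma_B)=\beta_{n+1-k}$; pairing these with the singular values of $H_i$ or $W_i$ (which carry $n-i+1$ ones) produces $\beta_i+\cdots+\beta_n$, so in \eqref{pi:bi} and \eqref{pi:bi-2} the term one actually subtracts is the one indexed by $i+1$ rather than $i-1$, which is the same index shift that already appears inside the proof of Lemma~\ref{lem:2.3}. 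The vanishing assertions $\alpha_{m+1}=\cdots=\alpha_n=0$ (for $m\le n$) and $\beta_1=\cdots=\beta_{n-p}=0$ (for $p\le n$) are read off directly from the shapes of $\Sigma_A$ and $\Sigma_B$ in \eqref{gsvd2}.

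I expect the only genuinely delicate point to be the block bookkeeping in the second step: lining up the partitions so that the correct corner block of the completing unitary is the one that remains, and confirming that this block sweeps out the whole operator-norm unit ball. This must be carried out separately for the ``tall'' cases ($n\le m$ and $n\le p$, where the nonzero blocks sit at the top or left and the leading corner block survives) and the ``wide'' cases ($m\le n$ and $p\le n$, where a conjugate transpose enters and the trailing corner block survives), so each of the four items needs its own setup even though the engine---canonical reduction, shrink the trace, Lemma~\ref{lem:2.2} then Lemma~\ref{lem:2.1}, telescope---is identical throughout.
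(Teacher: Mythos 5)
Your proposal follows essentially the same route as the paper's own proof: reduce via the GSVD (the paper keeps the unitary factor as $R(R^{H}R)^{-1/2}$ rather than naming it through a polar decomposition, which is the same thing), pass to the surviving corner block of the completed unitary, enlarge its constraint to the contraction ball, invoke Lemma~\ref{lem:2.2} to return to the unitary manifold, apply Lemma~\ref{lem:2.1} with $c=0$, and telescope; your remark that in b) and d) the subtracted term is really the one indexed by $i+1$ is also exactly what the paper's proof establishes in \eqref{nf:bi1-pi} and \eqref{nf:bi1-pi2}. One bookkeeping correction to your last paragraph: which corner block survives is dictated by whether $\Sigma_A$ or $\Sigma_B$ enters, not by tall versus wide --- in a) and c) the nonzero blocks of $\Sigma_A$, $\Sigma_A^{H}$ and of $\mathcal{G}_i$, $\mathcal{S}_i$ sit at the top/left, so the \emph{leading} blocks ($\Pi_{11}$ of $\Pi_1U$, resp.\ $\tilde{\Pi}_{11}$ of $\Pi_3(R^{H}R)^{-1/2}R^{H}$) survive, whereas in b) and d) the nonzero blocks of $\Sigma_B$, $\Sigma_B^{H}$ and of $\mathcal{H}_i$, $\mathcal{W}_i$ sit at the bottom/right, so it is the \emph{trailing} blocks $\Xi_{22}$, $\tilde{\Xi}_{22}$ that remain.
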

\begin{proof} a) If $n\leq m,$ then for $1\leq i\leq n$, we let $\mathcal{G}_{i}$ be defined by \eqref{def:Gi}. We set $\Sigma_{A}= \begin{pmatrix}\hat{\Sigma}_{A}\\O_{(m-n)\times n}\end{pmatrix}$. For any $\Pi_{1}\in\mathbb{U}_{m}$, let $\Pi_{1}U=\begin{pmatrix}\Pi_{11} &\Pi_{12}\\ \Pi_{21}&\Pi_{22}\end{pmatrix}$ with $\Pi_{11}\in\mathbb{C}^{n\times n}$.  By the GSVD of $\{A,B\}$ in \eqref{gsvd1} and \eqref{gsvd2} we have
 \begin{align*} &\max\limits_{\Pi_{1}\in\mathbb{U}_{m},\Pi_{2}\in\mathbb{U}_{n}}|\tr(\Pi_{1}A(A^{H}A+B^{H}B)^{-1/2}\Pi_{2}\mathcal{G}_{i})|\\ =&\max\limits_{\Pi_{1}\in\mathbb{U}_{m},\Pi_{2}\in\mathbb{U}_{n}}|\tr(\Pi_{1}U\Sigma_{A}R(R^{H}R)^{-1/2}\Pi_{2}\mathcal{G}_{i})|\\ =&\max\limits_{\Pi_{11}\Pi_{11}^{H}\leq I_{n},\Pi_{2}\in\mathbb{U}_{n}}|\tr(\Pi_{11}\hat{\Sigma}_{A}R(R^{H}R)^{-1/2}\Pi_{2}G_{i})|.
 \end{align*}
 Observe $f(\Pi_{11})=\tr(\Pi_{11}\hat{\Sigma}_{A}R(R^{H}R)^{-1/2}\Pi_{2}G_{i})$ is an analytical function of several complex variables on the domain $\Pi_{11}\Pi_{11}^{H}\leq I_{n}$. By Lemma \ref{lem:2.2} we know that $f(\Pi_{11})$ attains its maximum modulus on the characteristic manifold $\{\Pi_{11}\in\mathbb{C}^{n\times n}:\Pi_{11}\Pi_{11}^{H}= I_{n}\}$. Therefore, 
\begin{eqnarray*} 
&& \max\limits_{\Pi_{11}\Pi_{11}^{H}\leq I_{n},\Pi_{2}\in\mathbb{U}_{n}}|\tr(\Pi_{11}\hat{\Sigma}_{A}R(R^{H}R)^{-1/2}\Pi_{2}G_{i})| \\ &=&\max\limits_{\Pi_{11}\Pi_{11}^{H}= I_{n},\Pi_{2}\in\mathbb{U}_{n}}|\tr(\Pi_{11}\hat{\Sigma}_{A}R(R^{H}R)^{-1/2}\Pi_{2}G_{i})| 
 \end{eqnarray*}
 and it follows from Lemma \ref{lem:2.1} that
\begin{eqnarray} \label{nf:ai-pi}
&&\max\limits_{\Pi_{1}\in\mathbb{U}_{m},\Pi_{2}\in\mathbb{U}_{n}}|\tr(\Pi_{1}A(A^{H}A+B^{H}B)^{-1/2}\Pi_{2}\mathcal{G}_{i})| 
\nonumber\\ 
&&=\max\limits_{\Pi_{11}\Pi_{11}^{H}= I_{n},\Pi_{2}\in\mathbb{U}_{n}}|\tr(\Pi_{11}\hat{\Sigma}_{A}R(R^{H}R)^{-1/2}\Pi_{2}G_{i})|\nonumber\\
 &&=\alpha_{1}+\cdots+\alpha_{i}.
\end{eqnarray}
 Similarly, we have
\begin{eqnarray} \label{nf:ai1-pi}
 \max\limits_{\Pi_{1}\in\mathbb{U}_{m},\Pi_{2}\in\mathbb{U}_{n}}|\tr(\Pi_{1}A(A^{H}A+B^{H}B)^{-1/2}\Pi_{2}\mathcal{G}_{i-1})|=\alpha_{1}+\cdots+\alpha_{i-1}. 
 \end{eqnarray}
From \eqref{nf:ai-pi} and \eqref{nf:ai1-pi}  we know that \eqref{pi:ai} holds for $1\le i\le n$.
 
b) If $n\leq p$, then for  $1\le i\le n$, let $\mathcal{H}_{i}$ be defined by \eqref{def:Hi}. We set $\Sigma_{B}= \begin{pmatrix}O_{(p-n)\times n}\\ \hat{\Sigma}_{B}\end{pmatrix}$. For any $\Xi_{1}\in\mathbb{U}_{p}$, let $\Xi_{1}V=\begin{pmatrix}\Xi_{11} & \Xi_{12}\\ \Xi_{21}&\Xi_{22}\end{pmatrix}\in\mathbb{U}_{p}$ with $\Xi_{11}\in\mathbb{C}^{n\times n}$. By  the GSVD of $\{A,B\}$ in \eqref{gsvd1} and \eqref{gsvd2},  Lemma \ref{lem:2.2},  and Lemma \ref{lem:2.1} we have 
 \begin{eqnarray} \label{nf:bi-pi}
 &&\max\limits_{\Xi_{1}\in\mathbb{U}_{p},\Xi_{2}\in\mathbb{U}_{n}}|\tr(\Xi_{1}B(A^{H}A+B^{H}B)^{-1/2}\Xi_{2}\mathcal{H}_{i})|\nonumber\\  
 &-& \max\limits_{\Xi_{1}\in\mathbb{U}_{p},\Xi_{2}\in\mathbb{U}_{n}}|\tr(\Xi_{1}V\Sigma_BR(R^{H}R)^{-1/2}\Xi_{2}\mathcal{H}_{i})|\nonumber\\  
 &=&\max\limits_{\Xi_{22}\Xi_{22}^{H}\leq I_{n},\Xi_{2}\in\mathbb{U}_{n}}|\tr(\Xi_{22}\hat{\Sigma}_{B}R(R^{H}R)^{-1/2}\Xi_{2}H_{i})|\nonumber\\
 &=&\max\limits_{\Xi_{22}\Xi_{22}^{H}= I_{n},\Xi_{2}\in\mathbb{U}_{n}}|\tr(\Xi_{22}\hat{\Sigma}_{B}R(R^{H}R)^{-1/2}\Xi_{2}H_{i})|\nonumber\\ &=&\beta_{i}+\cdots+\beta_{n}.
 \end{eqnarray}
 Similarly, we have
 \begin{align}\label{nf:bi1-pi}
 \max\limits_{\Xi_{1}\in\mathbb{U}_{p},\Xi_{2}\in\mathbb{U}_{n}}|\tr(\Xi_{1}B(A^{H}A+B^{H}B)^{-1/2}\Xi_{2}\mathcal{H}_{i+1})| =\beta_{i+1}+\cdots+\beta_{n}.
 \end{align}
Using  \eqref{nf:bi-pi} and \eqref{nf:bi1-pi}  we know that \eqref{pi:bi} holds for $1\le i\le n$.

c) If $m\leq n$, then  it is easy to check that $\alpha_{m+1}=\cdots=\alpha_{n}=0$. For $1\leq i\leq m$, let $\mathcal{S}_{i}$ be defined by \eqref{def:Si}. We set $\Sigma_{A}=(\hat{\Sigma}_{A},O_{m\times(n-m)})$ and $\Pi_{3}(R^{H}R)^{-1/2}R^{H}=\begin{pmatrix}\tilde{\Pi}_{11}&\tilde{\Pi}_{12}\\ \tilde{\Pi}_{21}&\tilde{\Pi}_{22}\end{pmatrix}\in\mathbb{U}_{n}$.  By  the GSVD of $\{A,B\}$ in \eqref{gsvd1} and \eqref{gsvd2}, Lemma \ref{lem:2.2}, and Lemma  \ref{lem:2.1}  we have
 \begin{eqnarray} \label{nf:ai-pi2}
 &&\max\limits_{\Pi_{3}\in\mathbb{U}_{n},\Pi_{4}\in\mathbb{U}_{m}}|\tr(\Pi_{3}(A^{H}A+B^{H}B)^{-1/2}A^{H}\Pi_{4}\mathcal{S}_{i})|\nonumber\\
 &=& \max\limits_{\Pi_{3}\in\mathbb{U}_{n},\Pi_{4}\in\mathbb{U}_{m}}|\tr(\Pi_{3}(R^{H}R)^{-1/2}R^{H}\Sigma_A^HU^H\Pi_{4}\mathcal{S}_{i})|\nonumber\\
&=&\max\limits_{\tilde{\Pi}_{11}\tilde{\Pi}_{11}^{H}\leq I_{m},\Pi_{4}\in\mathbb{U}_{m}}|\tr(\tilde{\Pi}_{11}\hat{\Sigma}_{A}^{H}U^{H}\Pi_{4}S_{i})|\nonumber\\
&=&\max\limits_{\tilde{\Pi}_{11}\tilde{\Pi}_{11}^{H}= I_{m},\Pi_{4}\in\mathbb{U}_{m}}|\tr(\tilde{\Pi}_{11}\hat{\Sigma}_{A}^{H}U^{H}\Pi_{4}S_{i})|\nonumber\\
&=&\alpha_{1}+\cdots+\alpha_{i}.
 \end{eqnarray}
 Similarly, we have
 \begin{align}\label{nf:ai1-pi2}
 \max\limits_{\Pi_{3}\in\mathbb{U}_{n},\Pi_{4}\in\mathbb{U}_{m}}|\tr(\Pi_{3}(A^{H}A+B^{H}B)^{-1/2}A^{H}\Pi_{4}\mathcal{S}_{i-1})| =\alpha_{1}+\cdots+\alpha_{i-1}.
 \end{align}
From  \eqref{nf:ai-pi2} and \eqref{nf:ai1-pi2}  we know that \eqref{pi:ai-2} holds for $1\le i\le m$.

d) If $p\leq n$, then we know that $\beta_{1}=\cdots=\beta_{n-p}=0$. For  $n-p+1\leq i\leq n$, let $\mathcal{W}_{i}$ be defined by \eqref{def:Wi}. We set $\Sigma_{B}=(O_{p\times(n-p)},\hat{\Sigma}_{B})$ and $\Xi_{3}(R^{H}R)^{-1/2}R^{H}=\begin{pmatrix}\tilde{\Xi}_{11}&\tilde{\Xi}_{12}\\ \tilde{\Xi}_{21}&\tilde{\Xi}_{22}\end{pmatrix}\in\mathbb{U}_{n}$.  By  the GSVD of $\{A,B\}$ in \eqref{gsvd1} and \eqref{gsvd2}, Lemma \ref{lem:2.2}, and Lemma  \ref{lem:2.1}  we have
 \begin{eqnarray} \label{nf:bi-pi2}
&&\max\limits_{\Xi_{3}\in\mathbb{U}_{n},\Xi_{4}\in\mathbb{U}_{p}}|\tr(\Xi_{3}(A^{H}A+B^{H}B)^{-1/2}B^{H}\Xi_{4}\mathcal{W}_{i})|\nonumber\\
&=&\max\limits_{\Xi_{3}\in\mathbb{U}_{n},\Xi_{4}\in\mathbb{U}_{p}}|\tr(\Xi_{3}(R^{H}R)^{-1/2}R^{H}\Sigma_{B}^{H}V^{H}\Xi_{4}\mathcal{W}_{i})|\nonumber\\
&=&\max\limits_{\tilde{\Xi}_{22}\tilde{\Xi}_{22}^{H}\leq I_{p},\Xi_{4}\in\mathbb{U}_{p}}|\tr(\tilde{\Xi}_{22}\hat{\Sigma}_{B}^{H}V^{H}\Xi_{4}W_{i})|\nonumber\\
&=&\max\limits_{\tilde{\Xi}_{22}\tilde{\Xi}_{22}^{H}= I_{p},\Xi_{4}\in\mathbb{U}_{p}}|\tr(\tilde{\Xi}_{22}\hat{\Sigma}_{B}^{H}V^{H}\Xi_{4}W_{i})|\nonumber\\
&=&\beta_{i}+\cdots+\beta_{n}.
\end{eqnarray}
 Similarly, we have
 \begin{eqnarray} \label{nf:bi1-pi2}
 \max\limits_{\Xi_{3}\in\mathbb{U}_{n},\Xi_{4}\in\mathbb{U}_{p}}|\tr(\Xi_{3}(A^{H}A+B^{H}B)^{-1/2}B^{H}\Xi_{4}\mathcal{W}_{i+1})| =\beta_{i+1}+\cdots+\beta_{n}.
 \end{eqnarray}
Using  \eqref{nf:bi-pi2} and \eqref{nf:bi1-pi2}  we can conclude that \eqref{pi:bi-2} holds for $n-p+1\leq i\leq n$. 
The proof is complete.
\end{proof}

Based on Lemmas \ref{lem:2.3} and \ref{lem:2.4}, we give new model formulations of any GSV $(\alpha_{i},\beta_{i})$ of the  $(m,p,n)$-GMP $\{A,B\}$.  We note that $\alpha_{i}^{2}+\beta_{i}^{2}=1$ for $1\le i\le n$  and
for $m\leq n$, $\alpha_{m+1}=\cdots=\alpha_{n}=0
$ and for $p\leq n$, $\beta_{1}=\cdots=\alpha_{n-p}=0$. Then, for $1\leq i\leq [n/2]$ we use
$\alpha_{i}$'s formulations and for $[n/2]+1\leq i\leq n$ we use $\beta_{i}$'s formulations. Therefore, we can derive the following results.
\begin{theorem}\label{thm:2.5}
Let $\{A,B\}$ be an $(m,p,n)$-GMP and the GSV $(\alpha_{i},\beta_{i})$ of $\{A,B\}$
be given in  \eqref{gsvd1} and \eqref{gsvd2}. The following conclusions hold true. 
\begin{itemize}
\item[\rm a)]
If $n\leq m$, then we use
\begin{eqnarray}
\alpha_{i}=\phi_{i}^{\frac{1}{2}}(\tilde{\phi}_{i}),\;\mbox{for}\; 1\leq i\leq [n/2].\nonumber
\end{eqnarray}
\item[\rm b)] If $m<n$ and further if $m<[n/2]$, then we use
 \begin{eqnarray}
\alpha_{i}=\varphi_{i}^{\frac{1}{2}}(\tilde{\varphi}_{i}),\;for\;1\leq i\leq m,\;\alpha_{m+1}=\cdots=\alpha_{[n/2]}=0;\nonumber
\end{eqnarray}
If $m<n$ and further if $[n/2]\leq m$, then we use
 \begin{eqnarray}
  \alpha_{i}=\varphi_{i}^{\frac{1}{2}}(\tilde{\varphi}_{i}),\;\mbox{for}\; 1\leq i\leq[n/2].\nonumber
\end{eqnarray}
\item[\rm c)] If $n\leq p$, then we use
\begin{eqnarray}
\beta_{i}= \psi_{i}^{\frac{1}{2}}(\tilde{\psi}_{i}),\;\mbox{for}\;[n/2]+1\leq i\leq n.\nonumber
\end{eqnarray}
\item[\rm d)] If $p< n$ and further if $n-p<[n/2]$, then we use
 \begin{eqnarray}
\beta_{i}=\chi_{i}^{\frac{1}{2}}(\tilde{\chi}_{i}),\;\mbox{for}\;[n/2]+1\leq i\leq n;\nonumber
\end{eqnarray}
If $p< n$ and further if $n-p>[n/2]$, then we use
 \begin{eqnarray}
\beta_{[n/2]+1}=\cdots=\beta_{n-p}=0,\;\beta_{i}=\chi_{i}^{\frac{1}{2}}(\tilde{\chi}_{i}),\;\mbox{for}\;n-p+1\leq i\leq n.\nonumber
\end{eqnarray}
\end{itemize}
\end{theorem}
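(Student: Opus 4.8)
The plan is to obtain Theorem~\ref{thm:2.5} as a direct repackaging of Lemmas~\ref{lem:2.3} and~\ref{lem:2.4}, the only additional ingredient being the normalization $\alpha_i^2+\beta_i^2=1$ from Definition~\ref{def:gsvd}, which allows one to recover every pair $(\alpha_i,\beta_i)$ from roughly half of the $\alpha_i$'s together with half of the $\beta_i$'s. First I would record that Lemma~\ref{lem:2.3} expresses $\alpha_i^2$ as $\phi_i$ (when $n\le m$) or as $\varphi_i$ (when $m\le n$), and $\beta_i^2$ as $\psi_i$ (when $n\le p$) or as $\chi_i$ (when $p\le n$), on the stated index ranges; likewise Lemma~\ref{lem:2.4} expresses $\alpha_i$ as $\tilde\phi_i$ or $\tilde\varphi_i$ and $\beta_i$ as $\tilde\psi_i$ or $\tilde\chi_i$ on the same ranges. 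Consequently each $\alpha_i$ in range has the two equivalent descriptions $\phi_i^{1/2}$ and $\tilde\phi_i$ (respectively $\varphi_i^{1/2}$ and $\tilde\varphi_i$), and each $\beta_i$ in range has the descriptions $\psi_i^{1/2}$ and $\tilde\psi_i$ (respectively $\chi_i^{1/2}$ and $\tilde\chi_i$); this is exactly what the notation $\phi_i^{1/2}(\tilde\phi_i)$, and its analogues, abbreviate in the statement.

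Next I would invoke $\alpha_i^2+\beta_i^2=1$ to reduce the work: if $\alpha_i$ is known for $1\le i\le[n/2]$ and $\beta_i$ is known for $[n/2]+1\le i\le n$, then $\beta_i=(1-\alpha_i^2)^{1/2}$ for $i\le[n/2]$ and $\alpha_i=(1-\beta_i^2)^{1/2}$ for $i>[n/2]$ recover all remaining entries. Hence it suffices to produce formulas for $\alpha_1,\dots,\alpha_{[n/2]}$ and for $\beta_{[n/2]+1},\dots,\beta_n$, which is precisely the division of labor adopted in the four parts of the theorem.

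The remaining step is a case analysis on the signs of $m-n$ and $p-n$, compared against the cutoff $[n/2]$. For the $\alpha$'s: if $n\le m$, Lemma~\ref{lem:2.3}(a)/\ref{lem:2.4}(a) are valid for all $1\le i\le n$, in particular for $1\le i\le[n/2]$, giving part a). If $m<n$, Lemma~\ref{lem:2.3}(c)/\ref{lem:2.4}(c) are valid only for $1\le i\le m$, while Definition~\ref{def:gsvd} forces $\alpha_{m+1}=\cdots=\alpha_n=0$; so when $m<[n/2]$ the range $1\le i\le[n/2]$ splits into $1\le i\le m$ (formula) and $m+1\le i\le[n/2]$ (all zero), and when $[n/2]\le m$ the whole range $1\le i\le[n/2]$ is covered by the formula, which is part b). The $\beta$'s are symmetric: if $n\le p$, Lemma~\ref{lem:2.3}(b)/\ref{lem:2.4}(b) apply for all $1\le i\le n$, hence for $[n/2]+1\le i\le n$, giving part c); if $p<n$, Lemma~\ref{lem:2.3}(d)/\ref{lem:2.4}(d) apply only for $n-p+1\le i\le n$ while $\beta_1=\cdots=\beta_{n-p}=0$ by Definition~\ref{def:gsvd}, so when $n-p<[n/2]$ the range $[n/2]+1\le i\le n$ lies inside $n-p+1\le i\le n$ and is handled by the formula, whereas when $n-p>[n/2]$ one additionally records $\beta_{[n/2]+1}=\cdots=\beta_{n-p}=0$ and applies the formula on $n-p+1\le i\le n$, which is part d).

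I do not anticipate a genuine obstacle: all the analytic content is already in Lemmas~\ref{lem:2.3} and~\ref{lem:2.4}, and Theorem~\ref{thm:2.5} is a bookkeeping corollary. The only point requiring care is the interaction of the validity cutoffs $i\le m$ and $i\ge n-p+1$ with the halfway index $[n/2]$ in the degenerate cases $m<n$ and $p<n$ — specifically, verifying that the blocks of vanishing $\alpha_i$'s and $\beta_i$'s prescribed by Definition~\ref{def:gsvd} are exactly those indices in the required ranges that the optimization formulas do not reach.
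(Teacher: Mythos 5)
Your proposal is correct and follows essentially the same route as the paper, which derives Theorem~\ref{thm:2.5} directly from Lemmas~\ref{lem:2.3} and~\ref{lem:2.4} together with $\alpha_i^2+\beta_i^2=1$ and the vanishing blocks $\alpha_{m+1}=\cdots=\alpha_n=0$ (for $m<n$) and $\beta_1=\cdots=\beta_{n-p}=0$ (for $p<n$), splitting the indices at $[n/2]$. Your explicit case analysis of how the cutoffs $i\le m$ and $i\ge n-p+1$ interact with $[n/2]$ merely fills in bookkeeping the paper leaves implicit.
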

\begin{remark} \label{rem:2.1}
By Lemma \ref{lem:2.3}  we can provide new formula model of any GSV $(\alpha_{i},\beta_{i})$ of the GMP $\{A,B\}$ by trace function under one variable. By Lemma \ref{lem:2.4}, we can provide  new formula model of any GSV $(\alpha_{i},\beta_{i})$ of the GMP $\{A,B\}$ by trace function under two variables. By Theorem \ref{thm:2.5} and Lemma  \ref{lem:2.3}  we know that for $1\leq i\leq [n/2]$, we use
$\alpha_{i}$'s formula referring to $Q_{i},\mathcal{Q}_{i}$ involving more zeros (about $n-i$ or $m-i$ zeros) as their diagonal entries and for $[n/2]+1\leq i\leq n$, we use $\beta_{i}$'s formula
referring to $P_{i},\mathcal{P}_{i}$ involving more zeros (about $i$ zeros) as their diagonal entries by trace function under one variable. By Theorem \ref{thm:2.5}  and Lemma  \ref{lem:2.4}  we know that for $1\leq i\leq [n/2]$, we use $\alpha_{i}$'s formula referring to $\mathcal{G}_{i},\mathcal{S}_{i}$ involving more zeros (about $n-i,m-i$ zeros) as their diagonal entries and for $[n/2]+1\leq i\leq n$, we use $\beta_{i}$'s formula referring to $\mathcal{H}_{i},\mathcal{W}_{i}$ involving more zeros (about $i$ zeros) as their diagonal entries by trace function under two variables.
\section{Numerical methods for computing $\alpha_{i}$ and $\beta_{i}$}
In this section, based on trace function optimization under one variable and two variables, we give computational methods for computing $\alpha_{i}$ and $\beta_{i}$, respectively.
\end{remark}

\subsection{Based on trace function optimization under one variable}
In this subsection, based on trace function optimization under one variable, we present Newton's method on Grassmann manifold for computing $\alpha_{i}$ and $\beta_{i}$ by Theorem \ref{thm:2.5} and Lemma  \ref{lem:2.3}.

Let $\{A,B\}$ be an $(m,p,n)$-GMP. For demonstration purpose, we assume that $n\leq m$ and $n\le p$. To compute $\alpha_{i}^2$ as defined in Theorem \ref{thm:2.5}, we consider the following matrix optimization problems:
\BE\label{f1}
\begin{array}{cc}
\max & \displaystyle f_1(\Phi_1):=\frac{1}{2}\tr(A^{H}\Phi_{1}^{H}Q_{i}\Phi_{1} A(A^{H}A+B^{H}B)^{-1}) \\[2mm]
{\rm s.t.} & \Phi_{1}\in\mathbb{U}_{m}
\end{array}
\EE
and
\BE\label{f2}
\begin{array}{cc}
\max & \displaystyle f_2(\Phi_2):=\frac{1}{2}\tr(B^{H}\Phi_{2}^{H}P_{i}\Phi_{2} B(A^{H}A+B^{H}B)^{-1})\\[2mm]
{\rm s.t.} & \Phi_{2}\in\mathbb{U}_{p}.
\end{array}
\EE
By using the definitions of $Q_i$ and $P_i$ and $\alpha_{i}^{2}+\beta_{i}^{2}=1$ for $1\leq i\leq n$, we only need to focus on the following matrix optimization problems:
\BE\label{f1-eq}
\begin{array}{cc}
\max & \displaystyle f_1(\Phi_{11}):=\frac{1}{2}\tr(\Phi_{11}^{H}C\Phi_{11}) \\[2mm]
{\rm s.t.} & \Phi_{11}\in\st(m,i):=\{\Phi_{11}\in \mathbb{C}^{m\times i}\;|\; \Phi_{11}^H\Phi_{11}=I_{i}\},
\end{array}
\EE

and
\BE\label{f2-eq}
\begin{array}{cc}
\max & \displaystyle f_2(\Phi_{22}):=\frac{1}{2}\tr(\Phi_{22}^{H}D\Phi_{22}) \\[2mm]
{\rm s.t.} & \Phi_{22}\in\st(p,n-i+1),
\end{array}
\EE
for $1\le i\le[n/2]$,
where $C=A(A^{H}A+B^{H}B)^{-1})A^H$, $D=B(A^{H}A+B^{H}B)^{-1})B^H$, $\Phi_{11}$ is the first $i$ columns of $\Phi_{1}^H$ and $\Phi_{22}$ is the last $n-i+1$ columns of $\Phi_{2}^H$.

In the following, we focus on the solution of the matrix optimization problem (\ref{f1-eq}). The matrix optimization  problem (\ref{f2-eq}) can be solved  in a similar way.

We note that $f_1(\Phi_{11}Q)=f_1(\Phi_{11})$ for all $\Phi_{11}\in\st(m,i)$ and $Q\in\mathbb{U}_{i}$. As in \cite{M02}, let the complex Grassmann manifold $\grass(m,i)$ be the set of all $i$-dimensional complex subspace of $\mathbb{C}^m$. If $[X]$ means the subspace spanned by the columns of $X\in\st(m,i)$, then we have $[X]\in\grass(m,i)$. In particular, for any $X\in\st(m,i)$,  the natural projection $[X]\in\grass(m,i)$ corresponds  the equivalent class $\{XQ\in\st(m,i) \ | \ Q\in \mathbb{U}_i\}$ of $\st(m,i)$. Thus, instead of  problem (\ref{f1-eq}), we consider the following optimization problem:
\BE\label{f1-gr}
\begin{array}{cc}
\max & \displaystyle \tilde{f}_1([\Phi_{11}]):= f_1(\Phi_{11}) \\[2mm]
{\rm s.t.} & [\Phi_{11}] \in\grass(m,i).
\end{array}
\EE

Next, we present Newton's method for solving the optimization problem (\ref{f1-gr}).
Let $\Phi_{11}\in\st(m,i)$. The tangent space of $\st(m,i)$ at $\Phi_{11}$ is given by \cite{M02}
\[
 T_{\Phi_{11}}\st(m,i)=\{Z\in\mathbb{C}^{m\times i} \; | \; Z=\Phi_{11}\Omega + (\Phi_{11})_\perp K, \Omega^H=-\Omega, \Omega\in\mathbb{C}^{i,i}, K\in\mathbb{C}^{m-i,i}\},
\]
which can be endowed with the inner product
\[
\langle Z_1,Z_2\rangle  =\mathfrak{R}[\tr(Z_2^H(I-\frac{1}{2}\Phi_{11}\Phi_{11})^H)Z_1)],\quad Z_1,Z_2\in T_{\Phi_{11}}\st(m,i), \Phi_{11}\in\st(m,i).
\]
Here, $(\Phi_{11})_\perp$ means  that span$((\Phi_{11})_\perp)$ is the orthogonal complement of span$(\Phi_{11})$, where span$(\Phi_{11})$ denotes a linear space spanned by the column vectors of $\Phi_{11}$.
We note that the tangent space of $\grass(m,i)$ at $[\Phi_{11}]$ is given by \cite{M02}
 \[
 T_{[\Phi_{11}]}\grass(m,i)=\{Z\in\mathbb{C}^{m\times i} \; | \; Z=(\Phi_{11})_{\bot}K, K\in\mathbb{C}^{m-i,i}\}
 \subset T_{\Phi_{11}}\st(m,i).
 \]
 Hence, we can define a Riemannian metric on $\grass(m,i)$ by
 \[
 \langle Z_1,Z_2\rangle  =\mathfrak{R}[\tr(Z_2^HZ_1)],\quad Z_1,Z_2\in T_{[\Phi_{11}]}\grass(m,i), \Phi_{11}\in\st(m,i)
 \]
 with the induced norm $\|\cdot\|$.  Then the orthogonal projection onto $T_{[\Phi_{11}]}\grass(m,i)$ is given by
 \[
 P_{\Phi_{11}}Z=(I-\Phi_{11}\Phi_{11}^H)Z,\quad \forall Z\in\mathbb{C}^{m\times i}.
 \]
 We define the local cost function $g:T_{[\Phi_{11}]}\grass(m,i)\to\mathbb{R}$ by
 \[
 g(Z)= \tilde{f}_1([\Phi_{11}+Z]).
 \]
 It is easy to check that, for any $Z\in T_{[\Phi_{11}]}\grass(m,i)$,
 \begin{eqnarray*}
 g(Z)&=& f_1(\Phi_{11}) +\mathfrak{R}\tr(Z^{H}D_{\Phi_{11}}) \\
 &&+\frac{1}{2}\ve(Z)^{H}\Big(H_{\Phi_{11}} - (\Phi_{11}^HD_{\Phi_{11}} )^T\otimes I_m\Big)\ve(Z),
\end{eqnarray*}
where $D_{\Phi_{11}}=C\Phi_{11}$ and $H_{\Phi_{11}}=I_{i}\otimes C$ are the derivative and Hessian of $f_1$ at $\Phi_{11}$, respectively.

Based on the above analysis, Newton's method for solving the optimization problem (\ref{f1-gr}) can be described as follows \cite{M02}.
\begin{algorithm}  \label{nm}
{\rm }
\begin{description}
\item [{\rm Step 0.}] Choose $\Phi_{11}^0\in\st(m,i)$, $\rho,\eta\in(0,1)$, $\sigma\in (0,1/2]$ and let $k:=1$.
\item [{\rm Step 1.}] Apply the conjugate gradient (CG) method \cite[Chap. 11.3]{1} to solving
\[
P_{\Phi_{11}^k}(CZ^k - Z^k(\Phi_{11}^k)^HD_{\Phi_{11}^k}) = -P_{\Phi_{11}^k}D_{\Phi_{11}^k}
\]
for $Z^k\in T_{[\Phi_{11}^k]}\grass(m,i)$ such that
\BE\label{cnd1}
\|P_{\Phi_{11}^k}(CZ^k - Z^k(\Phi_{11}^k)^HD_{\Phi_{11}^k}) +P_{\Phi_{11}^k}D_{\Phi_{11}^k}\|\le \eta_k \|P_{\Phi_{11}^k}D_{\Phi_{11}^k}\|
\EE
and
\BE\label{cnd2}
\langle P_{\Phi_{11}^k}D_{\Phi_{11}^k}, Z^k\rangle \le\eta_k \langle Z^k, Z^k\rangle,
\EE
where $\eta_k=\min\{\eta, \|P_{\Phi_{11}^k}D_{\Phi_{11}^k}\| \}$. If (\ref{cnd1}) and  (\ref{cnd2}) are not attainable, then let
\[
Z^k=-P_{\Phi_{11}^k}D_{\Phi_{11}^k}.
\]
\item [{\rm Step 2.}] Let $l_k>0$ be the smallest integer $m$ such that
\[
f_1(\pi(\Phi_{11}^k+\beta^lZ^k))\le f_1(\Phi_{11}^k) +\sigma\rho^l \langle P_{\Phi_{11}^k}D_{\Phi_{11}^k}, Z^k\rangle
\]
Set
\[
Z^{k+1}=\pi({\Phi_{11}^k}+\beta^lZ^k).
\]
\item [{\rm Step 3.}] Replace $k$ by $k+1$ and go to Step 1.
\end{description}
\end{algorithm}

In Step 2 of Algorithm \ref{nm}, $\pi:\C^{m\times i}\to\grass(m,i)$ is the projection onto $\grass(m,i)$, which is  defined as follows: Let $X\in\C^{m\times i}$ be full column rank. Then
\[
\pi(X)=\left[\argmin_{Y\in\st(m,i)}\|X-Y\|^2\right].
\]
As noted in \cite{M02}, if the SVD of $X$ is given by $X=U\Sigma V^H$, then $\pi(X)=UI_{m,i}V^H$. If the QR decomposition of $X$ is $X=QR$, then $\pi(X)=Q_{m,i}$.

We must point out that, if $A$ and $B$ are real matrices, then the matrix $C$ in problem (\ref{f1-eq}) is real. In this case, one may use the trust-region methods in \cite{AM08} or the geometric Newton algorithm in \cite{ZB15} for solving the  optimization problem (\ref{f1-eq}).
\subsection{Based on trace function optimization under two variables}

In the following, based on trace function optimization under two variables, we use classical Golub-Kahan bidiagonalization method (which is implemented by {\tt MATLAB}-routine function {\tt svds}) for computing $\alpha_{i}$ and $\beta_{i}$ by Theorem \ref{thm:2.5}  and Lemma  \ref{lem:2.4}. For simplicity, we assume that $n\leq m$ and $n\le p$. To compute $\alpha_{i}$ and $\beta_{i}$ as defined in Lemma \ref{lem:2.4}, we study the following matrix optimization problems:
\BE\label{g1}
\begin{array}{cc}
\max & \displaystyle g_1(\Pi_1,\Pi_2):=|\tr(\Pi_1A(A^{H}A+B^{H}B)^{-1/2}\Pi_2\cg_i)| \\[2mm]
{\rm s.t.} & \Pi_{1}\in\mathbb{U}_{m},\Pi_{2}\in\mathbb{U}_{n}
\end{array}
\EE
and
\BE\label{g2}
\begin{array}{cc}
\max & \displaystyle g_2(\Xi_1,\Xi_2):=|\tr(\Xi_1B(A^{H}A+B^{H}B)^{-1/2}\Xi_2\ch_i)| \\[2mm]
{\rm s.t.} & \Xi_{1}\in\mathbb{U}_{p},  \Xi_{2}\in\mathbb{U}_{n}.
\end{array}
\EE
By using the definitions of $\cg_i$ and $\ch_i$ and $\alpha_{i}^{2}+\beta_{i}^{2}=1$ for $1\leq i\leq n$, we only need to focus on the following matrix optimization problems:
\BE\label{g1-eq}
\begin{array}{cc}
\max & \displaystyle g_1(\Pi_{11},\Pi_{21}):=|\tr(\Pi_{11}^{H}E\Pi_{21})| \\[2mm]
{\rm s.t.} & \Pi_{11}\in\st(m,i), \Pi_{21}\in\st(n,i)
\end{array}
\EE
and
\BE\label{g2-eq}
\begin{array}{cc}
\max & \displaystyle g_2(\Xi_{12},\Xi_{22}):=|\tr(\Xi_{12}^{H}F\Xi_{22})| \\[2mm]
{\rm s.t.} & \Phi_{22}\in\st(p,n-i+1),
\end{array}
\EE
for $1\le i\le [n/2]$,
where $E=A(A^{H}A+B^{H}B)^{-1/2}$, $F=B(A^{H}A+B^{H}B)^{-1/2}$, $\Pi_{11}^H$ is the first $i$ rows of $\Phi_{1}$, $\Pi_{21}$ is the first $i$ columns of $\Pi_{2}$, $\Xi_{12}^H$ is the last $n-i+1$ rows of $\Xi_{1}$, and $\Xi_{22}$ is the last $n-i+1$ columns of $\Xi_{2}$.
We will discuss the solution of the matrix optimization problem (\ref{g1-eq}). The matrix optimization  problem (\ref{g2-eq}) can be solved similarly.

As in \cite[Chap.10.4]{1}, starting a unit $2$-norm vector $\bv_1\in\Cn$, the Golub-Kahan process \cite{GK65} for bidiagonalizing the matrix $E$ can be stated as follows (see also \cite[Algorithm 10.4.1]{1}).
\begin{algorithm}  \label{svd}
{\rm }
\begin{description}
\item [{\rm Step 0.}] Choose $\bv_1\in\C^n$ with $\|\bv_1\|_2=1$, $\bp_0=\bv_1$, $\beta_0=1$, $\bu_0={\bf 0}$, and let $k:=0$.
\item [{\rm Step 1.}] While $\beta_k\neq 0$
 \begin{eqnarray*}
&&\bv_{k+1} = \bp_k/\beta_k\\
&&\br_{k+1} = A\bv_{k+1}-\beta_{k}\bu_{k}\\
&&\alpha_{k+1}=\|\br_{k+1}\|_2\\
&&\bu_{k+1}=\br_{k+1}/\alpha_{k+1}\\
&&\bp_{k+1}=A^H\bu_{k+1} - \alpha_{k+1}\bv_{k+1}\\
&&\beta_{k+1}=\|\bp_{k+1}\|_2
\end{eqnarray*}
end
\item [{\rm Step 2.}] Replace $k$ by $k+1$ and go to Step 1.
\end{description}
\end{algorithm}

From Algorithm \ref{svd}, we can obtain $V_{k}=[\bv_1,\bv_2,\ldots,\bv_k]$, $U_{k}=[\bu_1,\bu_2,\ldots,\bu_k]$ with orthonormal columns and a $k\times k$ bidiagonal matrix
\[
B_k=\left[
\begin{array}{ccccc}
\alpha_1 & \beta_1 & \cdots & \cdots & 0\\
0 & \alpha_2 & \beta_2  &\cdots & \vdots\\
\vdots & \ddots &  \ddots &  \ddots & 0\\
\vdots & &  0 & \alpha_{k-1} & \beta_{k-1}\\
0 & \cdots & 0 &  0 & \alpha_{k}
\end{array}
\right]
\]
such that
\BE\label{eq:bid}
EV_{k} =U_{k}B_{k}\quad\mbox{and}\quad E^HU_k=V_kB^H + \bp_{k}\bfe_{k}^T.
\EE
 Then we can compute the SVD of $B_{k}$ via the SVD algorithm in \cite[Chap. 8.6]{1}
\[
F_{k}^HB_{k}G_{k}=\Gamma_{k}=\diag(\gamma_1,\ldots,\gamma_{k})
\]
and form the matrices
\[
Y_k=V_kG_k\quad\mbox{and}\quad Z_k=U_kF_k.
\]
From (\ref{eq:bid}) we have
\[
EY_{k} =Z_{k}\Gamma_{k}\quad\mbox{and}\quad E^HZ_k=Y_k\Gamma + \bp_{k}\bfe_{k}^TF_k.
\]
Hence, the singular vector matrix $\Pi_{11}$ and $\Pi_{21}$  can be estimated from the $i$ column vectors of $Z_k$ and $Y_k$ corresponding to the $i$ largest singular values of $B_k$.
We then use  {\tt MATLAB}-routine function {\tt svds}. Commands '{tt svds}(A,i)' and '{\tt svds}(A,i,'smallest')' denote $i$ largest and smallest singular values of matrix $A$, respectively.

\begin{remark} \label{rem:3.1}
The given model formulas by trace function optimization under one variable and two variables  are new.
Based on the new formulas, we give computational methods involving  Newton's method on Grassmann manifold and
Algorithm 10.4.1 in \cite{1}. Based on our new model  formulas, the proposed algorithms  are very efficient for computing arbitrary GSV of an $(m,p,n)$-GMP $\{A,B\}$ for general scaled GSV problems: (i) For large $n$, we need design more effective algorithms to compute arbitrary GSV of the GMP for large scaled GSV problems. This is a new topic in the future. (ii) For $A^{H}A+B^{H}B$ being ill-posed (i.e., $\sigma_{\max}(A^{H}A+B^{H}B)$ is far away from $\sigma_{\min}(A^{H}A+B^{H}B)$), we need construct more effective algorithms. This is also a new topic in the future. (iii) The convergent analysis and theoretical computational complexity of the proposed algorithms are also discussed as a new topic in the future.
\end{remark}

\section{Numerical experiments}

In this section we give numerical experiments to illustrate the efficiency of Algorithms \ref{nm} and \ref{svd} for computing arbitrary GSV of a GMP. All the tests were carried out in {\tt MATLAB 2019a} running on a workstation with a Intel Xeon CPU E5-2687W at 3.10 GHz and 32 GB of RAM. In particular,  Algorithm \ref{svd}  was implemented by the  built-in functions {\tt svds} in {\tt MATLAB R2019a}.

In our numerical tests,  the tolerance for {\tt svds} is set to be $10^{-12}$ while the stopping criterion  for  Algorithm \ref{nm} is set to be
\[
\|P_{\Phi_{11}^k}D_{\Phi_{11}^k}\| \le 10^{-6}
\]
and we set $\eta=0.1$, $\rho=0.5$, and $\sigma=10^{-4}$. The largest number of outer iterations in Algorithm \ref{nm} is set to be $100$ and the largest number of iterations in the CG method is set to be $mn$ or $np$.
We will access efficiency of the proposed algorithms.

We first give some random examples to access the efficiency of the proposed algorithms.
\begin{example} \label{ex:1}
We first generate the exact generalized singular values $\alpha_1^*,\ldots,\alpha_n^*$ and $\beta_1^*,\ldots,\beta_n^*$ by using the  built-in function {\tt rand} in {\tt MATLAB R2019a} such that
\[
1\ge\alpha_1^*\ge\alpha_2^*\ge\cdots\ge \alpha_n^*\ge 0,\quad 0\le\beta_1^*\le\beta_2^*\le\cdots\le\beta_n^*\le 1,
\]
and
\[
\alpha_j^2+\beta_j^2=1,\quad 1\le j\le n.
\]
Then the $(m,p,n)$-GMP  $\{A,B\}$ is given by $A=U_*\Sigma_A^*W_*$ and $B=V_*\Sigma_B^*W_*$, where
\[
\Sigma_A^*=
\left(
\begin{array}{ccc}
\diag(\alpha_1^*,\ldots,\alpha_n^*) \\
O_{(m-n)\times n}
\end{array}
\right),\quad
\Sigma_B^*=
\left(
\begin{array}{ccc}
O_{(p-n)\times n} \\
\diag(\beta_1^*,\ldots,\beta_n^*)
\end{array}
\right),
\]
and the orthogonal matrices $U_*\in\mathbb{C}^{m\times m}$and $V_* \in\mathbb{C}^{p\times p}$ and the nonsingular matrix $W_*\in\Rnn$ are generated by the built-in functions {\tt orth} and {\tt randn} in {\tt MATLAB R2019a}. 
\end{example}

For demonstration purposes, we assume that $n\le m$ and $n\le p$. Table \ref{table41} lists the numerical results  for Example \ref{ex:1}, where `{\tt Err1.}', `{\tt Err2.}', `{\tt Err3.}',  `{\tt Err4.}',  and `{\tt CT.}' mean  the relative errors $\max_{1\le i\le [n/2]}|\alpha_i^k-\alpha_i^*|/|\alpha_i^*|$,  $\max_{[n/2]+1\le i\le n}|\beta_i^k-\beta_i^*|/|\beta_i^*|$, $\min_{1\le i\le [n/2]}|\alpha_i^k-\alpha_i^*|/|\alpha_i^*|$,  $\min_{[n/2]+1\le i\le n}|\beta_i^k-\beta_i^*|/|\beta_i^*|$, and the total computing time in seconds at the final iterates of the corresponding algorithms accordingly. 

\begin{table}[ht]\renewcommand{\arraystretch}{1.0} \addtolength{\tabcolsep}{2pt}
  \caption{Numerical results for Example \ref{ex:1}.}\label{table41}
  \begin{center} {\scriptsize
   \begin{tabular}[c]{|c|c|l|l|l|l|c|}
     \hline
Alg. & $(m,p,n)$   &  {\tt Err1.} &  {\tt Err2.} &  {\tt Err3.} &  {\tt Err4.} & {\tt CT.}\\  \hline

Alg. & (100, 80, 20)    & $1.02\cdot 10^{-15}$   & $2.15\cdot 10^{-15}$ & $1.79\cdot 10^{-16}   $   & $1.74\cdot 10^{-17}$ & $14.62$ \\
\ref{nm} & (200, 180, 40)     & $4.90\cdot 10^{-14}$   & $2.16\cdot 10^{-13}$ & $2.63\cdot 10^{-16}   $   & $2.70\cdot 10^{-16}$ & $19.80$ \\
& (500, 400, 80)     & $5.28\cdot 10^{-14}$   & $3.52\cdot 10^{-13}$ & $5.58\cdot 10^{-17}   $   & $2.58\cdot 10^{-16}$ & $20.35$ \\
& (800, 700, 100)   & $4.38\cdot 10^{-14}$   & $2.53\cdot 10^{-14}$  & $ 2.75\cdot 10^{-17}  $   & $3.85\cdot 10^{-17}$ & $21.18$ \\
& (1000, 800, 300)   & $5.06\cdot 10^{-14}$   & $3.72\cdot 10^{-14}$ & $ 2.24\cdot 10^{-16}     $   & $1.09\cdot 10^{-17}$ & $23.78$ \\
& (2000,1800, 500)  & $2.58\cdot 10^{-14}$   & $3.47\cdot 10^{-14}$ & $ 2.35\cdot 10^{-16}    $   & $3.06\cdot 10^{-16}$ & $30.05$ \\
& (3000,2500, 600)  & $3.20\cdot 10^{-13}$   & $3.11\cdot 10^{-14}$ & $ 1.83\cdot 10^{-15}  $   & $2.96\cdot 10^{-16}$ & $38.10$ \\
& (5000,4000, 600)   & $1.38\cdot 10^{-12}$   & $2.16\cdot 10^{-12}$ & $ 4.70\cdot 10^{-15}  $   & $2.63\cdot 10^{-14}$ & $42.07$ \\
& (10000,8000, 600)  & $1.84\cdot 10^{-13}$   & $5.26\cdot 10^{-14}$ & $ 3.28\cdot 10^{-15}   $   & $1.58\cdot 10^{-16}$ & $45.59$ \\
\cline{2-6}\hline

{\tt svds} & (100, 80, 20)    & $4.62\cdot 10^{-15}$   & $1.47\cdot 10^{-14}$ & $2.53\cdot 10^{-16}$   & $3.80\cdot 10^{-17}$ & $15.10$ \\
& (200, 180, 40)     & $6.28\cdot 10^{-14}$   & $7.90\cdot 10^{-13}$ & $3.37\cdot 10^{-16}$   & $4.07\cdot 10^{-16}$ & $20.03$ \\
& (500, 400, 80)     & $6.83\cdot 10^{-14}$   & $6.02\cdot 10^{-13}$ & $ 6.84\cdot 10^{-17}$   & $6.94\cdot 10^{-15}$ & $21.75$ \\
& (800, 700, 100)   & $5.40\cdot 10^{-14}$   & $3.11\cdot 10^{-14}$  & $ 3.01\cdot 10^{-17}$   & $5.57\cdot 10^{-17}$ & $22.94$ \\
& (1000, 800, 300)   & $7.55\cdot 10^{-14}$   & $5.09\cdot 10^{-14}$ & $2.36\cdot 10^{-16}$   & $8.46\cdot 10^{-16}$ & $24.80$ \\
& (2000,1800, 500)  & $4.05\cdot 10^{-14}$   & $5.38\cdot 10^{-14}$ & $ 2.96\cdot 10^{-16}$   & $5.88\cdot 10^{-16}$ & $24.52$ \\
& (3000,2500, 600)  & $4.67\cdot 10^{-13}$   & $5.93\cdot 10^{-14}$ & $ 2.74\cdot 10^{-15}$   & $4.27\cdot 10^{-16}$ & $25.09$ \\
& (5000,4000, 600)   & $4.52\cdot 10^{-12}$   & $3.85\cdot 10^{-12}$ & $ 2.07\cdot 10^{-14}$   & $1.44\cdot 10^{-13}$ & $25.95$ \\
& (10000,8000, 600)  & $3.08\cdot 10^{-13}$   & $1.05\cdot 10^{-13}$ & $6.93\cdot 10^{-15}$   & $5.96\cdot 10^{-16}$ & $26.03$ \\
\cline{2-6}\hline
\end{tabular} }
  \end{center}
\end{table}
\begin{figure}[ht]
\hfil\includegraphics[width=5.5in]{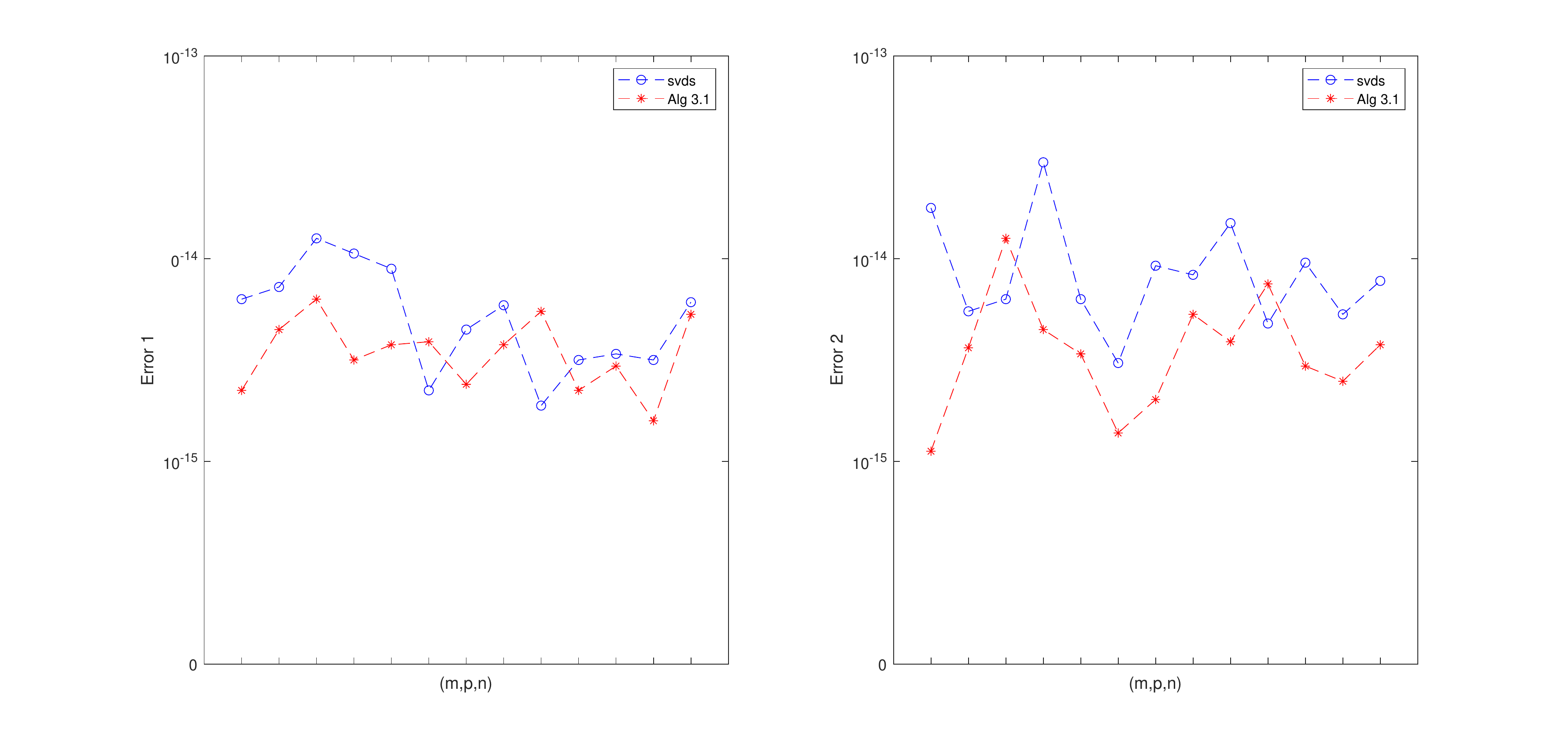}
\caption{Computed relative errors  for Example \ref{ex:1} with random GMPs} \label{fig41}
\end{figure}
\begin{figure}[ht]
\hfil\includegraphics[width=4in]{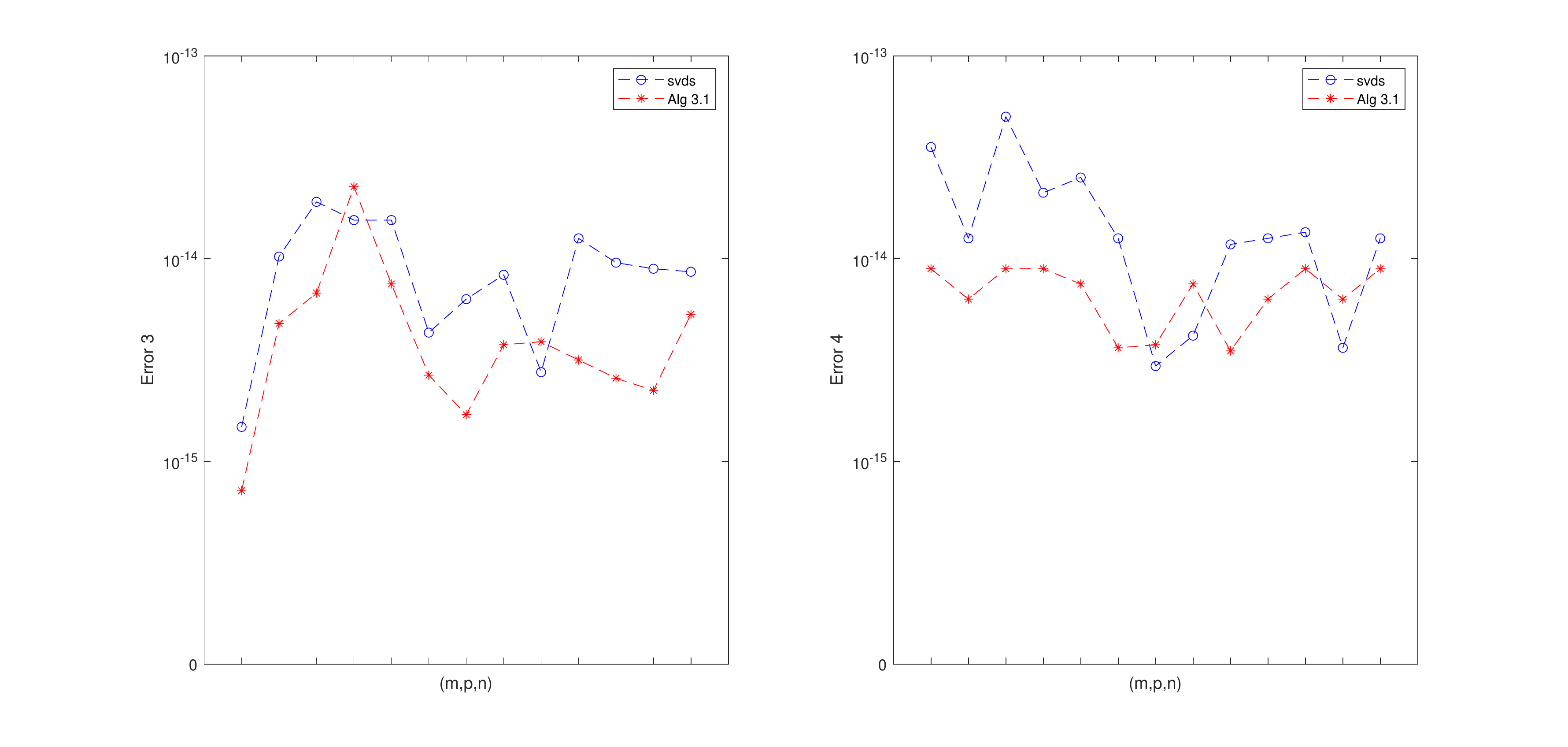}
\caption{Errors 3 and 4  for Example \ref{ex:1} with random GMPs} \label{fig42}
\end{figure}
\begin{figure}[ht]
\hfil\includegraphics[width=4in]{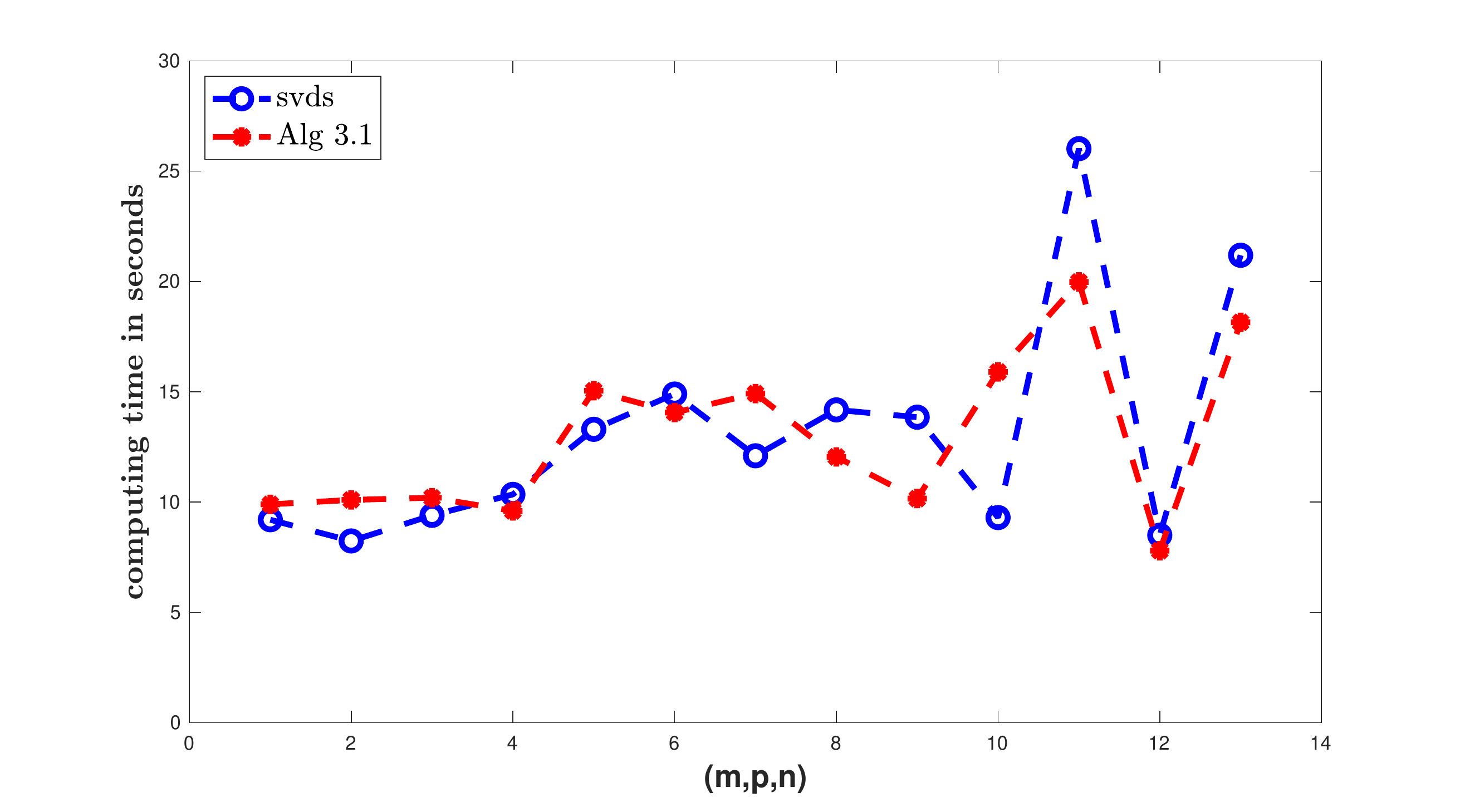}
\caption{Computing time  for Example \ref{ex:1} with random GMPs} \label{fig43}
\end{figure}

Also, in Figures \ref{fig41}--\ref{fig43}, we report the corresponding values of `{\tt Err1.}', `{\tt Err2.}', `{\tt Err3.}',  `{\tt Err4.}',  and `{\tt CT.}'  for Example \ref{ex:1} with other random $(m,p,n)$-GMPs.

We see from  Table \ref{table41} that the computational time by Algorithm \ref{nm} is less than $45.59$s and computational time by {\tt svds} is less than $26.03$s. From Table \ref{table41} and Figures \ref{fig41}--\ref{fig42},  we observe that all values of $\max_{1\le i\le  [n/2]}|\alpha_i^k-\alpha_i^*|/|\alpha_i^*|$, $\max_{ [n/2]+1\le i\le n}|\beta_i^k-\beta_i^*|/|\beta_i^*|$ and $\min_{1\le i\le  [n/2]}|\alpha_i^k-\alpha_i^*|/|\alpha_i^*|$, $\min_{ [n/2]+1\le i\le n}|\beta_i^k-\beta_i^*|/|\beta_i^*|$ computed by Algorithm \ref{nm} are sharper than those computed by {\tt  svds} for most cases. This implies the solution precision of Algorithm \ref{nm} is a little higher than the solver {\tt svds}. From Figure \ref{fig43} we see that the computational time of {\tt svds} is a little less than Algorithm \ref{nm} for some random GMPs.

We then give some practical numerical examples to access the efficiency of the proposed model and algorithms.
\begin{example} \label{ex:2}
We derive data of the corresponding gene mRNA expression level after doing the mice macrophage research experiment.
Several groups of datasets from different conditions with different chemical reagents, which are denoted by $\{E_{1},F_{1}\}$ and 
$\{E_{2},F_{2}\}$ as in Appendix.
\end{example}

As we know in \cite{4,5}, the GSVD induces a linear transformation of two
data sets from the two genes $\times$ arrays spaces to two reduced and
diagonalized genelets $\times$ arraylets spaces. The genelets are
shared by both data sets.
A single microarray probes the relative expression levels of $m$
genes in a single sample. A series of $n_{1}$ arrays probes the
genome-scale expression levels in $n_{1}$ different samples, i.e.,
under $n_{1}$ different experimental conditions. Let the matrix $A$,
of size $m$-genes $\times$ $n_{1}$-arrays, denote the full expression data, whose the $k$-th row is the
expression of the $k$-th gene across the different samples that correspond to the different arrays.

Let the matrix $B$, of size $p$-genes $\times$ $n_{2}$-arrays, denote the relative expression levels of
$p$ genes under $n_{2} = n_{1} = n <\max\{m, p\}$ experimental
conditions that correspond one to one to the $n_{1}$ conditions
underlying $A$.
The GSVD then induces simultaneous linear transformation of two
expression data sets $A$ and $B$ from two $m$-genes $\times$ $n$-arrays
and $p$-genes $\times$ $n$-arrays spaces to two reduced $n$-genelets
$\times$ $n$-arraylets spaces. For more details, please see the PNAS web site: www.pnas.org, and also http://genome-www.stanford.edu/GSVD/). Let $A$ and $B$ have the following GSVD:
\[
A=U\Sigma_{A}R^{-1}
\quad\mbox{and}\quad
B=V\Sigma_{B}R^{-1}.
\]
In these spaces the data is represented by the diagonal non-negative matrices $\Sigma_{A}$ and $\Sigma_{B}$ and denote $\langle k|\Sigma_{A}|l\rangle=\varepsilon_{1,l}\delta_{kl},\;  \langle k|\Sigma_{B}|l\rangle=\varepsilon_{2,l}\delta_{kl}$ for all $1\leq k,\;l\leq n$,
where $\delta_{kl}=1$ if $k=l$ and  $\delta_{kl}=0$ if $k\neq l$. By $\langle k|\Sigma_{A}$ we denote the $k$-th row of the matrix $\Sigma_{A}$, which lists the
expression of the $k$-th gene across the different samples that
correspond to the different arrays.  By $\Sigma_{A}|l\rangle$ we denote the $l$-th column of the matrix $\Sigma_{A}$, which lists the genome-scale
expression measured by the $l$-th array. The antisymmetric angular distance between the data sets,
\[
\theta_{l}=\arctan(\varepsilon_{1,l}/\varepsilon_{2,l})-\pi/4,
\]
indicates the relative significance of the mth genelet, i.e., its significance in the first data set relative to that in the second data set in
terms of the ratio of the expression information captured by this genelet in the first data set to that in the second data set. An angular distance of 0 indicates a genelet of equal significance in both data sets, with $\pm\frac{\pi}{4}\;(\pm0.785398163397448)$
indicates no significance in the second data set relative to the first, or in the first relative to the second, respectively. (For more details, please see the PNAS web site).

In this experiment expression levels of 10020 genes are probed in a single sample. A series of $n$ arrays probes the genome-scale expression levels in $n$ samples under different conditions. By $E_{1},F_{1},E_{2},F_{2}$ we denote gene mRNA expression data with original data set, with just PA stimulation and after knocking out SHP2 gene of mice without and with PA stimulation, respectively. Data sets $\{E_{1},F_{1}\}$ and $\{E_{2},F_{2}\}$ with $m=10020,p=10020$ and $n=128$ are drawn. The computed results for Example \ref{ex:2} are listed in Tables \ref{table421}--\ref{table423} and Figures \ref{fig51}--\ref{fig52}.

\textbf{Analysis steps:}

1. The GSVD induces simultaneous linear transformation of the two
expression data sets $\{E_{1}, F_{1}\}$ and $\{E_{2}, F_{2}\}$
from $10020$-genes $\times$ $n$-arrays spaces to the reduced $n$-genelets
$n$-arraylets spaces. By transforming into the GSVD, the data sets $\{E_{1}, F_{1}\}$ and $\{E_{2}, F_{2}\}$
are represented by the diagonal non-negative matrices $\{\Sigma_{E_{1}}, \Sigma_{F_{1}}\}$ and $\{\Sigma_{E_{2}}, \Sigma_{F_{2}}\}$. We denote
\begin{eqnarray}
&&\langle k|\Sigma_{E_{1}}|l\rangle=\varsigma_{E_{1},l}\delta_{kl},\;\;\langle k|\Sigma_{F_{1}}|l\rangle=\varsigma_{F_{1},l}\delta_{kl},\nonumber\\
&&\langle k|\Sigma_{E_{2}}|l\rangle=\varsigma_{E_{2},l}\delta_{kl},\;\;\langle k|\Sigma_{F_{2}}|l\rangle=\varsigma_{F_{2},l}\delta_{kl},\nonumber
\nonumber
\end{eqnarray}
for all $1\leq k,\;l\leq n$, respectively and the antisymmetric angular distance between the data sets
\begin{eqnarray}
\theta_{lE_{1},F_{1}}&=&\arctan(\alpha_{l}/\beta_{l})-\pi/4,\;\;
\theta_{lE_{2},F_{2}}=\arctan(\tilde{\alpha}_{l}/\tilde{\beta}_{l})-\pi/4.
\end{eqnarray}
We denote the GSVs of $\{E_{1},F_{1}\},\{E_{2},F_{2}\}$ by
\begin{eqnarray}
\sigma\{E_{1},F_{1}\}&=&\{\alpha_{i},\beta_{i}\},\;\;\sigma\{E_{2},F_{2}\}=\{\tilde{\alpha}_{i},\tilde{\beta}_{i}\},i=1,\ldots,n.\nonumber
\end{eqnarray}
We first compute the values of $\alpha_{1}$ and $\beta_{n}$.

2. If $\alpha_{1}\leq1,\beta_{n}\leq1$, we compute $\alpha_{t}$ for $t\ge 2$ and $\beta_{s}$ for $t\ge 2$ till to $\alpha_{t_{0}}<1,\beta_{s_{0}}<1.$ Then compute $\alpha_{j}$ for $t_{0}\le j\le [n/2]$ till to $\alpha_{j}=\frac{\sqrt{2}}{2}$.

3. If $\alpha_{j_{0}}=\frac{\sqrt{2}}{2}$ and $\alpha_{j_{0}+1}<\frac{\sqrt{2}}{2},$ then stop computing $\alpha_{j}$ for $j=j_{0}+2,\ldots,[n/2]$. If $\alpha_{j}>\frac{\sqrt{2}}{2},j=1,2,\ldots,[n/2],$ then compute $\beta_{k}$ for $k=n,n-1,\ldots,[n/2]+1$ till to $\beta_{k}=\frac{\sqrt{2}}{2}$.

On the other hand, if $\beta_{k_{0}}=\frac{\sqrt{2}}{2}$ and $\beta_{k_{0}-1}<\frac{\sqrt{2}}{2},$ then stop computing $\beta_{k}$ for $k=k_{0}-2,\ldots,[n/2]-1$. If $\beta_{k}>\frac{\sqrt{2}}{2}$ for $k=n,n-1,\ldots,[n/2]-1,$ then compute $\alpha_{j}$ for $j=1,2,\ldots,[n/2]$ till to $\alpha_{j}=\frac{\sqrt{2}}{2}$.

4. We need find $j$ for $\alpha_{j}=1,\frac{\sqrt{2}}{2}$ and $k$ for $\beta_{k}=1,\frac{\sqrt{2}}{2}$.

5. Meanwhile, by Theorem \ref{thm:2.5}, Lemma  \ref{lem:2.3}, and Lemma  \ref{lem:2.4}  we know that computing $\alpha_{j}$ and $\beta_{k}$ needs $Q_{j}$ and $\mathcal{G}_{j}$ for $j=1,2,\ldots,[n/2]$ and $P_{k}$ and $\mathcal{H}_{k}$ for $k=n,n-1,\ldots,[n/2]+1$. It refers to calculating only partial values of
$\alpha_{j}$ and $\beta_{k}$, which costs much less.

\vspace{1mm}
By the above analysis steps we compute the following $(\alpha_{i},\beta_{i})$ of $\{E_{1},F_{1}\}$ and $(\tilde\alpha_{i}, \tilde\beta_{i})$ of $\{E_{2},F_{2}\}$ for some $i$ (see  Tables \ref{table421}--\ref{table422}). We give computation steps for  $\{E_{1},F_{1}\}$ and $\{E_{2},F_{2}\}$ datasets for Example \ref{ex:2} as follows.

\textbf{Computation steps for $\{E_{1},F_{1}\}$ dataset:}

1. Firstly by using Algorithm \ref{nm} and {\tt svds} we compute $\alpha_{1}=0.810132020256465$ and $\beta_{n}=0.718697912381140$.

2. Then compute $\beta_{i},i=127,\ldots,121$ and $\beta_{121}=0.707160154056073\approx\frac{\sqrt{2}}{2}$ and continue to compute $\beta_{120}=0.705954228735626<\frac{\sqrt{2}}{2}.$

3. Since $\alpha_{i}\geq\alpha_{i+1}$ and $\beta_{i}\leq\beta_{i+1}$ for $i=1,\ldots,127$, we compute $\alpha_{1}=0.810132020256465$ and $\beta_{1}=0.586247481662121$ and
 \begin{eqnarray*}
&&0.718697912381140=\beta_{128}>\cdots>\beta_{122}=0.711863678067525>\beta_{121}\approx\frac{\sqrt{2}}{2}\nonumber\\
&&>\beta_{120}=0.705954228735626\geq\cdots\geq\beta_{1}=0.586247481662121.\nonumber
\end{eqnarray*}

4. We need find $j$ for $\alpha_{j}=1,\frac{\sqrt{2}}{2}$ and $k$ for $\beta_{k}=1,\frac{\sqrt{2}}{2}$. From the above analysis we know that $\beta_{121}=\frac{\sqrt{2}}{2}$ and for any $j,\alpha_{j}\neq1$ and $\beta_{j}\neq1$.

5. By $\alpha_{i}^{2}+\beta_{i}^{2}=1$ for $i=1,120,\ldots,128$ we can derive $\beta_{1}$ and $\alpha_{i}$ for $i=1,120,\ldots,128,$ and then compute $\theta_{lE_{1},F_{1}}=\arctan(\alpha_{l}/\beta_{l})-\pi/4=0$ for $l=121$ (see Table \ref{table423} and Figure \ref{fig51}).

6. Therefore, we need compute $\alpha_{1}$ and $\beta_{i}$ for $i=120,\ldots,128$, which needs $Q_{1},\mathcal{G}_{1},P_{k},\mathcal{H}_{k}$ for $k=120=n-8,\ldots,128=n$ and  the cost is much less by the proposed algorithms.

\begin{table}[ht]\renewcommand{\arraystretch}{1.0} \addtolength{\tabcolsep}{2pt}
  \caption{Computed $(\alpha_{i},\beta_{i})$  for $\{E_{1},F_{1}\}$ dataset for Example \ref{ex:2}.}\label{table421}
\begin{center} {\scriptsize
\begin{tabular}[c]{|c|c|r|l|}
     \hline
 Alg. & $i$  &  $\alpha_{i}$  & $\beta_{i}$ \\  \hline
  Alg. \ref{nm}   &     i=1  & 0.810132020256465&/ \\
  &i=128 & /&0.718697912381140               \\
  &     i=127 &/&0.717627620092847\\
  &       i=126&/&0.717063249543065\\
  &      i=125 &/&0.714466023589318\\
  &  i=124&/&0.713951890380495\\
  &   i=123&/&0.712111759572336\\
  &       i=122&/& 0.711863678067525\\
   &      i=121&/&0.707160154056073\\
     &      i=120&/&0.705954228735626\\
  &      &  &   \\
 {\tt svds} &   i=1  & 0.810111613718651  & /\\
&i=128  &/& 0.718688457942421\\
  &    i=127  &/&0.717625631885725\\
  &    i=126  &/&0.717060896558196\\
  &    i=125 &/&0.714449213982706\\
  &    i=124 &/&0.713940747495409\\
  &    i=123 &/&0.712104173213587\\
  &    i=122&/&0.711876696858187\\
   &     i=121&/& 0.707156111014042\\
     &   i=120&/&0.705959880773849\\\hline
  \end{tabular} }
  \end{center}
\end{table}
\begin{table}[ht]\renewcommand{\arraystretch}{1.0} \addtolength{\tabcolsep}{2pt}
  \caption{Computed  $(\tilde\alpha_{i}, \tilde\beta_{i})$ for $\{E_{2},F_{2}\}$ dataset for Example \ref{ex:2}.}\label{table422}
  \begin{center} {\scriptsize
  \begin{tabular}[c]{|c|c|r|l|l|}
     \hline
 Alg. & $i$  &  $\tilde\alpha_{i}$   & $\tilde\beta_{i}$ \\  \hline
  Alg. \ref{nm}  &    i=1  & 0.796382849891640  &/\\
   &i=128 &/&0.725245192340354\\
  &   i=127 &/&0.717466135452788\\
  &   i=126  &/&0.715890755680548\\
  &   i=125  &/&0.715628751189350\\
  &  i=124  &/&0.713702581283016\\
  &    i=123& /&0.712809336437327\\
  &   i=122&/&0.710632060519230\\
   &     i=121&/&0.709521730635737\\
     &  i=120&/&0.707850060237633\\
   &    i=119&/&0.707773758361755\\
  &   i=118&/&0.707066878837273 \\
  &    i=117&/&0.705546192429588 \\
     &      &  &   \\
  {\tt svds}  &   i=1  & 0.796382814200499 & /\\
 &i=128  &/ &0.725245187463159\\
  &  i=127  &/ &0.717466140240871\\
  &   i=126 &/ &0.715890779555181\\
  &   i=125 &/ &0.715628737273613\\
  &   i=124  &/ &0.713702591000907\\
  &   i=123& /& 0.712809355841902\\
  &   i=122& /& 0.710632056740559\\
   &   i=121&/&0.709521777784081\\
     &  i=120& /&0.707850078689362\\
   &     i=119&/ &0.707773720014017\\
  &   i=118&/ &0.707066856930138\\
  &    i=117& /&0.705546152277959 \\  \hline
  \end{tabular}}
  \end{center}
\end{table}

\begin{table}[ht]\renewcommand{\arraystretch}{1.0} \addtolength{\tabcolsep}{2pt}
  \caption{$\theta_{l}\{E_{1},F_{1}\}$ and $\theta_{l}\{E_{2},F_{2}\}$ for Example \ref{ex:2}.}\label{table423}
  \begin{center} {\scriptsize
  \begin{tabular}[c]{|c|c|r|l|}
     \hline
 $l$  &  $\theta_{l}\{E_{1},F_{1}\}$ &  $l$  & $\theta_{l}\{E_{2},F_{2}\}$\\ \hline
i=128&  -0.016529700047278&i=128&-0.025992294349443\\
 i=127& -0.014991647262518&i=127&-0.014759799059394\\
i=126&  -0.014181605218803 &i=126& -0.012500876385385\\
i=125& -0.010462462349320 &i=125& -0.012125698075447\\
i=124&  -0.009727915483202&i=124& -0.009371923296157\\
i=123& -0.007103397068787&i=123&-0.008097504074166\\
i=122& -0.006750101190428&i=122& -0.004998008671197\\
i=121 & 0&i=121 &-0.003421112938104\\
i=120 & 0.001628629810176&i=120& -0.001051708553944\\
 &$\vdots$&i=119&0.000943693585799\\
&$\downarrow$&  i=118&0\\
&$\vdots$&   i=117&0.002204577491173 \\
&$\vdots$&&$\vdots$\\
&$\downarrow\;increasing$&&$\downarrow\;increasing$\\
 i=1  & 0.158979112164203  &i=1&0.135892471613447\\
 \hline
  \end{tabular}}
  \end{center}
\end{table}
We then give computation steps for $\{E_{2},F_{2}\}$ dataset for Example \ref{ex:2} as follows.

\textbf{Computation steps for $\{E_{2},F_{2}\}$ dataset:}

1. Firstly, by using Algorithm \ref{nm} and {\tt svds} we compute $\alpha_{1}=0.796382849891640$ and $\beta_{n}=\beta_{128}=0.725245192340354$.

2. Then compute $\beta_{i}$ for $i=127,\ldots,118$ and $\beta_{118}=0.707066856930138\approx\frac{\sqrt{2}}{2}$ and continue to compute $\beta_{117}=0.705546152277959<\frac{\sqrt{2}}{2}.$

3. Since $\alpha_{i}\geq \alpha_{i+1}$ and $\beta_{i}\leq \beta_{i+1}$ for $i=1,\ldots,127$, we compute $\alpha_{1}=0.796382849891640$ and $\beta_{1}=0.604792821054011$ and
\begin{eqnarray*}
&&0.725245187463159=\beta_{128}>\cdots>\beta_{119}=0.707773720014017>\beta_{118}\approx\frac{\sqrt{2}}{2}\nonumber\\
&&>\beta_{117}=0.705546152277959\geq\cdots\geq\beta_{1}=0.604792821054011.\nonumber
\end{eqnarray*}

4. We need find $j$ for $\alpha_{j}=1,\frac{\sqrt{2}}{2}$ and $k$ for $\beta_{k}=1,\frac{\sqrt{2}}{2}$. From the above analysis we know that $\beta_{118}=\frac{\sqrt{2}}{2}$ and for any $j,\alpha_{j}\neq1$ and $\beta_{j}\neq1$.

5. By $\alpha_{i}^{2}+\beta_{i}^{2}=1$ for $i=1,117,\ldots,128$ we derive that $\beta_{1}$ and $\alpha_{i} $ for $i=1,117,\ldots,128,$ and then compute $\theta_{lE_{2},F_{2}}=\arctan(\tilde{\alpha}_{l}/\tilde{\beta}_{l})-\pi/4$ for $l=118$ (see Table \ref{table423} and Figure \ref{fig52}).

6. Therefore, we need compute $\alpha_{1}$ and $\beta_{i}$ for $i=117,\ldots,128$, which needs $Q_{1},\mathcal{G}_{1},P_{k},\mathcal{H}_{k}$ for $k=117=n-11,\ldots,128=n$ and also the cost is much less by the proposed algorithms.

\begin{figure}[ht]
\hfil\includegraphics[width=4in]{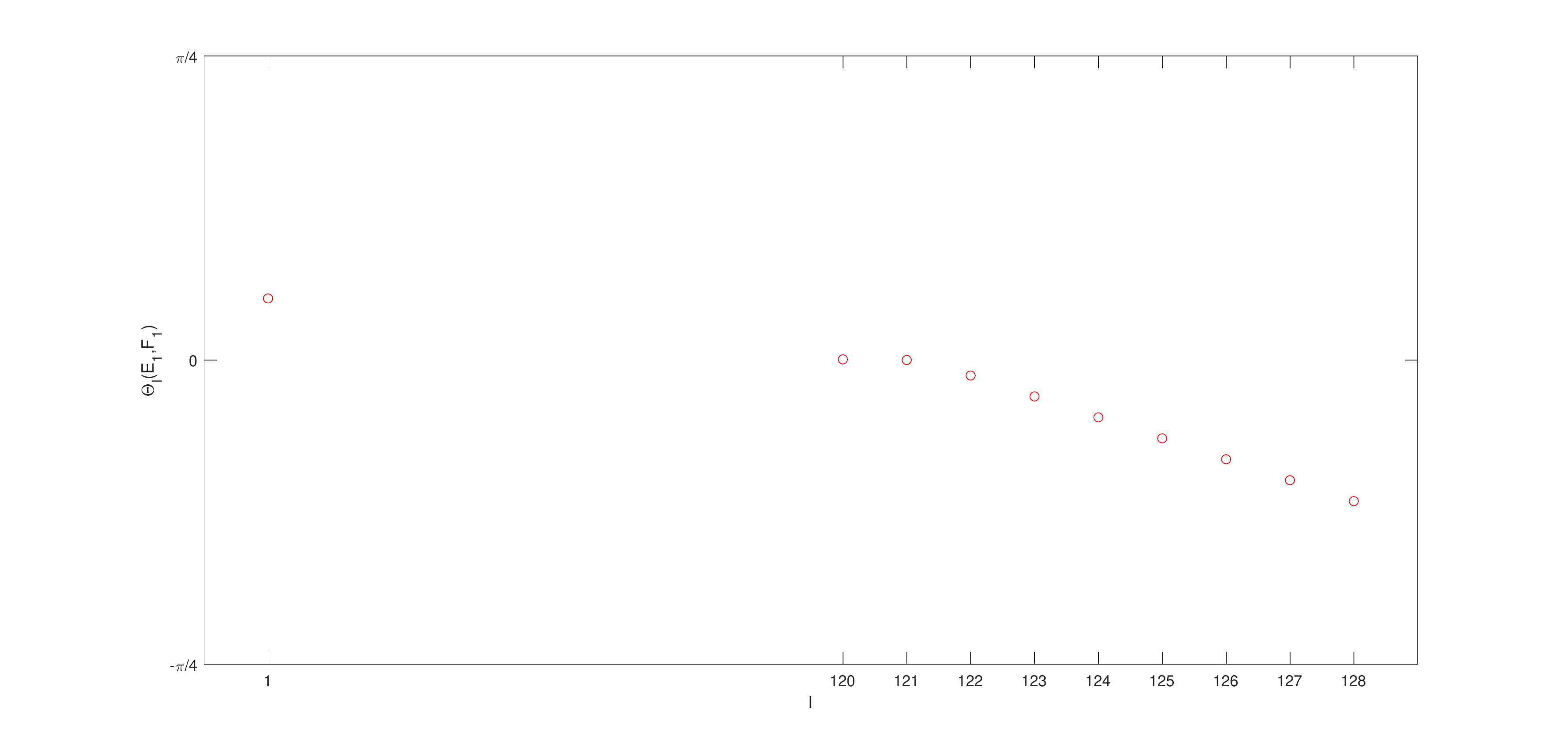}
\caption{$\theta_{l}\{E_{1},F_{1}\}$ for Example \ref{ex:2}.} \label{fig51}
\end{figure}

\begin{figure}[ht]
\hfil\includegraphics[width=4in]{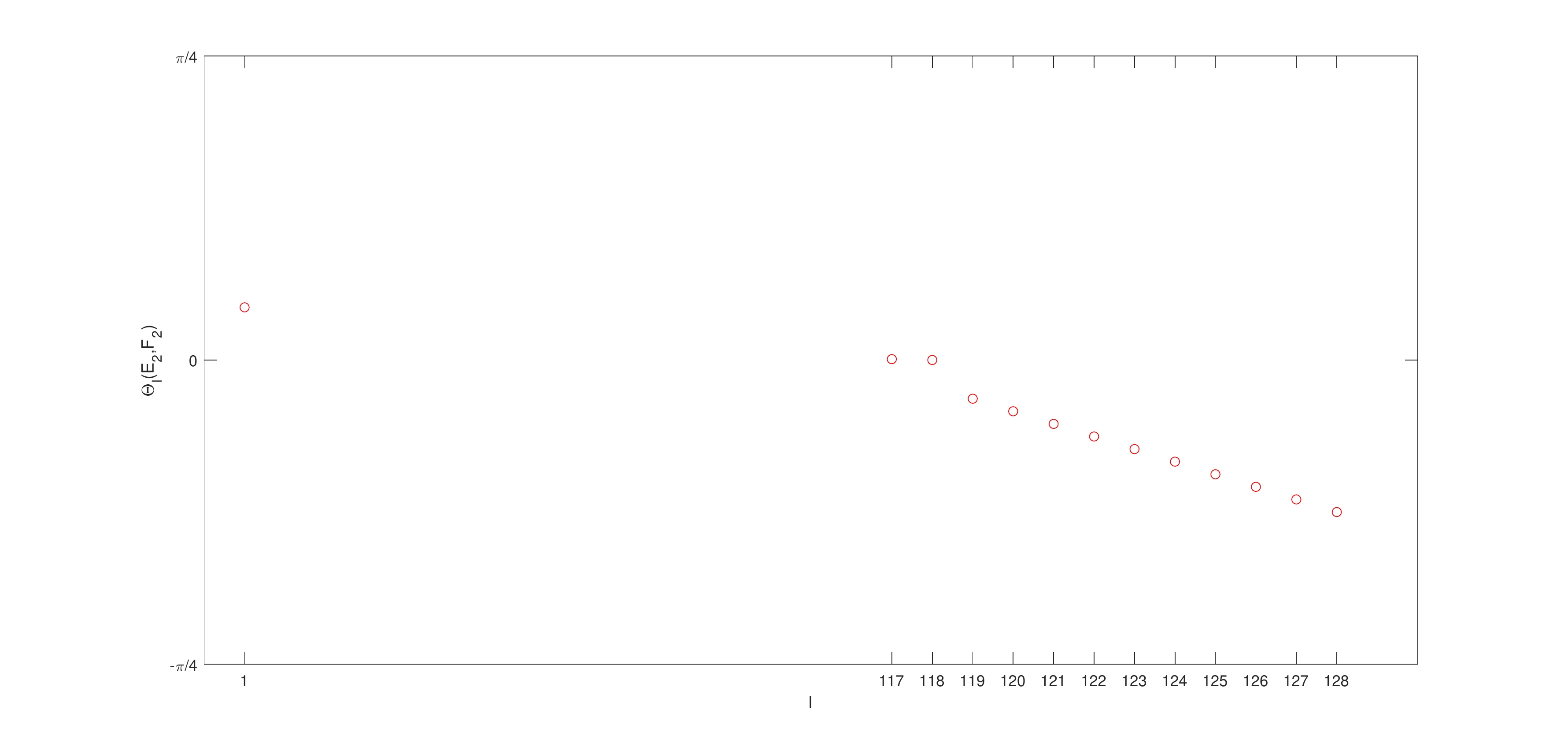}
\caption{$\theta_{l}\{E_{2},F_{2}\}$ for Example \ref{ex:2}.}\label{fig52}
\end{figure}

We conclude the following results.
\begin{itemize}
\item The computed values by Algorithm \ref{nm} and {\tt svds} are almost the same as Algorithm \ref{nm}.

\item For $l=121,\theta_{lE_{1},F_{1}}=0$ and for $l=118,\theta_{lE_{2},F_{2}}=0$. This shows $E_{1}$ and $F_{1}$ from original and PA stimulation data sets have genelets of equal significance in $l=121$-th genelet. $E_{2}$ and $F_{2}$ after knocking out SHP2 gene of mice without and with PA stimulation data sets have genelets of equal significance in $l=118$-th genelet. Hence we can conclude that $E_{1}$ and $F_{1}$ from  $E_{1}$-S1,$\ldots$, $E_{1}$-Sn and $F_{1}$-S1,$\ldots$, $F_{1}$-Sn data sets have genelets of equal significance in $121$-th genelet and $E_{2}$ and $F_{2}$ from $E_{2}$-S1,$\ldots$, $E_{2}$-Sn and $F_{2}$-S1,$\ldots$, $F_{2}$-Sn data sets have genelets of equal significance in $118$-th genelet. This is helpful and valuable for further experimental research when analyzing genelets of equal significance and no significance.
\end{itemize}

\section{Concluding remarks}
In this paper, from the aspect of small cost, high precision and practical applications, we give new formulations for computing arbitrary generalized singular value of Grassman matrix pair.
Then by using these new formula models, we design the corresponding algorithms involving Newton's method on Grassmann manifold and the {\tt MATLAB}-routine solver {\tt svds}, which only result in smaller cost and higher precision. We apply the proposed algorithms in some numerical tests. Numerical tests are accessed to illustrate the efficiency of the proposed methods for computing arbitrary generalized singular value of Grassman matrix pair. The advantage of the new formula models and the corresponding algorithms is to compute several generalized singular values, which cost much less than making the whole  generalized singular value decomposition

\section{Appendix}

Because the full data sets in Example 4.2 are big, we list a few lines from $E_{1},F_{1},E_{2},F_{2}$ data sets in Tables \ref{app:1}--\ref{app:4}. Full data sets will be attached as supplementary materials if needed.

\begin{table}[ht]\small
\caption{$E_{1}$ dataset} \label{app:1}
\begin{tabular}{ccccc}
\hline Gene-Symbol&$E_{1}-S1$& $E_{1}-S2$ &$E_{1}-S3$	&$\cdots$\\\hline
Dpm3
 &81.4726374728877 	&81.4726018003362 	&81.4727154775931 	&$\cdots$ \\
Gm15417
 & 90.5801106500692 &	90.5795202525307 &	90.5787761181889	&$\cdots$ \\
Pygo2
 &12.6975522059201 	&12.6988796188933 &	12.6993125974590 	&$\cdots$ \\
Shc1
 &91.3380167005621&	91.3378745358918&	91.3376629236392&$\cdots$ \\
Pbxip1
 &63.2360840051609	&63.2367024531678&	63.2364758975677&$\cdots$ \\
Pmvk
&9.75338665579280&	9.75526093535089&	9.75508758961278	&$\cdots$ \\
Kcnn3
 &27.8496050906937	&27.8505384828550 	&27.8511345102449 	&$\cdots$ \\
Adar
&54.6883232327317&	54.6891680197768 &	54.6886325094756 	&$\cdots$ \\
Ube2q1
&95.7524727418996	&95.7509942971894 &	95.7518196627301 	&$\cdots$ \\
She
&96.4902382384426 	&96.4892413334247 &	96.4893910841147 	&$\cdots$ \\
Gm19710
&15.7606332242848 &	15.7631050234608 &	15.7608553343750&$\cdots$\\
1700094D03Rik
&97.0607956377947&	97.0599299497393	&97.0594282912228	&$\cdots$ \\
4933434E20Rik
&95.7170575264070&	95.7173209697467&	95.7170535737553&$\cdots$ \\
$\cdots$&$\cdots$&$\cdots$&$\cdots$&$\cdots$\\\hline
\end{tabular}
\end{table}

\begin{table}[ht]\small
\caption{$F_{1}$ dataset} \label{app:2}
\begin{tabular}{ccccc}
\hline Gene-Symbol&$F_{1}-S1$& $F_{1}-S2$ &$F_{1}-S3$	&$\cdots$\\\hline
Dpm3 &81.4730879236440	&81.4730419716306 	&81.4733975198905 	&$\cdots$ \\
Gm15417& 90.5793236844437 &	90.5790411296398 &	90.5798411038575 	&$\cdots$ \\
Pygo2 &12.6986100545965	&12.6994178526520 &	12.6991540372817 	&$\cdots$ \\
Shc1 &91.3374209581048&	91.3383168331391&	91.3378505277527&$\cdots$ \\
Pbxip1 &63.2364298863660	&63.2368278125429&	63.2360364918807&$\cdots$ \\
Pmvk&9.75435696150952&	9.75486146024052&	9.75438342568573	&$\cdots$ \\
Kcnn3 &27.8504766511108	&27.8492428038682 	&27.8492926144878 	&$\cdots$ \\
Adar&54.6891748325179&	54.6887342933556 &	54.6887102842456 	&$\cdots$ \\
Ube2q1&95.7501784872693 	&95.7517688714067 &	95.7519690164486 	&$\cdots$ \\
She&96.4885801345460 	&96.4896650657055 &	96.4896621218625 	&$\cdots$ \\
Gm19710&15.7610321484248 &	15.7623919870229 &	15.7611206781601&$\cdots$\\
1700094D03Rik&97.0585851759412&	97.0606347263579	&97.0599898393001	&$\cdots$ \\
4933434E20Rik&95.7164506262917&	95.7158767256219&	95.7174835298262&$\cdots$ \\
$\cdots$&$\cdots$&$\cdots$&$\cdots$&$\cdots$\\\hline
\end{tabular}
\end{table}

\begin{table}[ht]\small
\caption{$E_{2}$ dataset} \label{app:3}
\begin{tabular}{ccccc}
\hline Gene-Symbol&$F_{1}-S1$& $F_{1}-S2$ &$F_{1}-S3$	&$\cdots$\\\hline
Dpm3 &81.4523098512597 	&81.4629821583036 	&81.4745727961033 	&$\cdots$ \\
Gm15417 & 90.5779819953193 &	90.5848577932900 &	90.5906502919661 	&$\cdots$ \\
Pygo2 &12.6974035147009	&12.6873751483440 &	12.7044923008073 	&$\cdots$ \\
Shc1 &91.3286343984840&	91.3251461714906&	91.3490834650614&$\cdots$ \\
Pbxip1 &63.2309324317487	&63.2334546316634&	63.2183204836416&$\cdots$ \\
Pmvk&9.75034064884895&	9.74695910760629&	9.76229363097876	&$\cdots$ \\
Kcnn3 &27.8621314570093	&27.8559191575373 	&27.8528545813737 	&$\cdots$ \\
Adar&54.7078492202499&	54.6885058089360 &	54.6830922478760 	&$\cdots$ \\
Ube2q1&95.7501045170701	&95.7625720815543 &	95.7713725989043 	&$\cdots$ \\
She&96.4900197059825 	&96.4971930401927 &96.4960354367868 	&$\cdots$ \\
Gm19710&15.7624053853472 &15.7593724639648&	15.7652870075486&$\cdots$\\
1700094D03Rik&97.0562110151379&	97.0573160527630	&97.0517544441638&$\cdots$ \\
4933434E20Rik&95.7235525356460&	95.7194450298556&	95.7137803601465&$\cdots$ \\
$\cdots$&$\cdots$&$\cdots$&$\cdots$&$\cdots$\\\hline
\end{tabular}
\end{table}

\begin{table}[ht]\small
\caption{$F_{2}$ dataset} \label{app:4}
\begin{tabular}{ccccc}
\hline Gene-Symbol&$F_{2}$-S1& $F_{2}$-S2 &$F_{2}$-S3	&$\cdots$\\\hline
Dpm3 &81.4817402134289 	&81.4749725312774 	&81.4812412409804 	&$\cdots$ \\
Gm15417 & 90.5840967987585 &90.5899209907963&	90.5746906609589 	&$\cdots$ \\
Pygo2 &12.6928679920493 	&12.7027778119699 &	12.7077668503773 	&$\cdots$ \\
Shc1 &91.3441570447946&	91.3397332006277&	91.3264759805061&$\cdots$ \\
Pbxip1 &63.2375980574507	&63.2321703530508&	63.2410869112732&$\cdots$ \\
Pmvk&9.75274288967974&	9.75412058445621&	9.74564742530634	&$\cdots$ \\
Kcnn3&27.8676517195286	&27.8522488517431 	&27.8484309535345 	&$\cdots$ \\
Adar&54.6778922856374&	54.6881209028444 &	54.6752501090303 	&$\cdots$ \\
Ube2q1&95.7382577903967 	&95.7358047141356 &	95.7488830100979 	&$\cdots$ \\
She&96.4874304231390 	&96.5078630839640 &	96.4794754795017 	&$\cdots$ \\
Gm19710&15.7754836985419 &15.7497320889512 &15.7634036450606&$\cdots$\\
1700094D03Rik&97.0546341778293&97.0583580985808	&97.0454377736231	&$\cdots$ \\
4933434E20Rik&95.7220563317885&	95.7103795510779&	95.7080583743294&$\cdots$ \\
$\cdots$&$\cdots$&$\cdots$&$\cdots$&$\cdots$\\\hline
\end{tabular}
\end{table}

\end{document}